\newtheorem{thm}{Theorem}[section]
\newcommand{\bthm}{\begin{thm}} \newcommand{\ethm}{\end{thm}}
\newtheorem{prop}[thm]{Proposition}
\newcommand{\bprp}{\begin{prop}} \newcommand{\eprp}{\end{prop}}
\newtheorem{fact}[thm]{Fact}
\newcommand{\bfct}{\begin{fact}} \newcommand{\efct}{\end{fact}}
\newtheorem{prob}[thm]{Problem}
\newcommand{\bprb}{\begin{prob}} \newcommand{\eprb}{\end{prob}}
\newtheorem{quest}[thm]{Question}
\newcommand{\bqtn}{\begin{quest}} \newcommand{\eqtn}{\end{quest}}
\newtheorem{lem}[thm]{Lemma}
\newcommand{\blem}{\begin{lem}} \newcommand{\elem}{\end{lem}}
\newtheorem{claim}[thm]{Claim}
\newcommand{\bclm}{\begin{claim}} \newcommand{\eclm}{\end{claim}}
\newtheorem{cor}[thm]{Corollary}
\newcommand{\bcor}{\begin{cor}} \newcommand{\ecor}{\end{cor}}
\newtheorem{conj}[thm]{Conjecture}
\newcommand{\bcnj}{\begin{conj}} \newcommand{\ecnj}{\end{conj}}
\theoremstyle{definition}
\newtheorem{defn}[thm]{Definition}
\newcommand{\bdfn}{\begin{defn}} \newcommand{\edfn}{\end{defn}}
\newtheorem{spec}[thm]{Specializing}
\newcommand{\bspc}{\begin{spec}} \newcommand{\espc}{\end{spec}}
\theoremstyle{remark}
\newtheorem{rem}[thm]{Remark}
\newcommand{\brem}{\begin{rem}} \newcommand{\erem}{\end{rem}}
\newtheorem{cnv}[thm]{Convention}
\newcommand{\bcnv}{\begin{cnv}} \newcommand{\ecnv}{\end{cnv}}
\newtheorem{exam}[thm]{Example}
\newcommand{\bexm}{\begin{exam}} \newcommand{\eexm}{\end{exam}}
\newcommand{\bpf}{\begin{proof}} \newcommand{\epf}{\end{proof}}
\newcommand{\Z}{\mathbb Z}
\newcommand{\N}{\mathbb N}
\newcommand{\sA} {{\mathcal A}}
\newcommand{\sB} {{\mathcal B}}
\newcommand{\sC} {{\mathcal C}}
\newcommand{\sD} {{\mathcal D}}
\newcommand{\sE} {{\mathcal E}}
\newcommand{\sG} {{\mathcal G}}
\newcommand{\sO} {{\mathcal O}}
\newcommand{\sS} {{\mathcal S}}
\renewcommand{\phi}{\varphi}
\renewcommand{\theta}{\vartheta}
\newcommand{\gu}{{\upsilon}}
\newcommand{\mkp}{\medskip}
\def\defi{\buildrel\rm def \over=}
\begin{document}

\title[Representation of Group Isomorphisms]{Representation of Group Isomorphisms I}

\author[M. Ferrer]{Marita Ferrer}
\address{Universitat Jaume I, Instituto de Matem\'aticas de Castell\'on,
Campus de Riu Sec, 12071 Castell\'{o}n, Spain.}
\email{mferrer@mat.uji.es}

\author[M. Gary]{Margarita Gary}
\address{Departamento de Ciencias Naturales y Exactas, Universidad de la Costa,  CUC,
 Calle 58, 55-66, Barranquilla, Colombia.}
\email{mgary@cuc.edu.co}

\author[S. Hern\'andez]{Salvador Hern\'andez}
\address{Universitat Jaume I, INIT and Departamento de Matem\'{a}ticas,
Campus de Riu Sec, 12071 Castell\'{o}n, Spain.}
\email{hernande@mat.uji.es}

\thanks{ Research Partially supported by the Spanish Ministerio de Econom\'{i}a y Competitividad,
grant MTM2016-77143-P (AEI/FEDER, EU),
and the Universitat Jaume I, grant P1171B2015-77.}
%\vspace{1cm}

\begin{abstract}
Let $G$ be a metric group and let $\sA ut(G)$ denote the automorphism group of $G$. If $\sA$ and $\sB$ are
groups of $G$-valued maps defined on the sets $X$ and $Y$, respectively,
we say that $\sA$ and $\sB$ are \emph{equivalent} if there is a group isomorphism
$H\colon\sA\to\sB$ such that there is a bijective map $h\colon Y\to X$ and a map $w\colon Y\to \sA ut (G)$
satisfying  $Hf(y)=w[y](f(h(y)))$ for all $y\in Y$ and $f\in \sA$.
In this case, we say that $H$ is represented as a \emph{weighted composition
operator}. A group isomorphism
$H$ defined between $\sA$ and $\sB$ is called \emph{separating} when for each pair of maps
$f,g\in \sA$ satisfying that $f^{-1}(e_G)\cup g^{-1}(e_G)=X$, it holds that $(Hf)^{-1}(e_G)\cup (Hg)^{-1}(e_G)=Y$.
Our main result establishes that under some mild conditions, every separating group isomorphism can be represented as a weighted
composition operator. As a consequence we establish the equivalence of
two function groups if there is a biseparating isomorphism defined between them.
%Some connections with coding theory are highlighted.
\end{abstract}

\thanks{{\em 2010 Mathematics Subject Classification.} Primary 46E10. Secondary 22A05, 54H11,  54C35, 47B38, 93B05, 94B10\\
{\em Key Words and Phrases:} Banach-Stone Theorem, MacWilliams Equivalence Theorem, separating map,
weighted composition operator, representation of linear isomorphisms, Group-valued continuous function,
pointwise convergence topology.}

%\dedicatory{}

\date{15 May 2018}

\maketitle \setlength{\baselineskip}{24pt}

\section {Introduction}
There are many results that are concerned with the representation of linear operators as weighted composition
maps and the equivalence of specific groups of continuous functions in the literature, which is
vast in this regard. We will only mention here the classic Banach-Stone Theorem that, when $G$
is the field of real or complex numbers, establishes that if the Banach spaces of continuous
functions $C(X,G)$ and $C(Y,G)$ are isometric, then they are equivalent and the isometry can be
represented as a weighted composition map
(cf. \cite{Banach,gau-jeang-wong,SHdz:Houston,her-bec-nar,Stone}).
Another important example appears in coding theory, where the well known MacWilliams Equivalence
Theorem asserts that, when $G$ is a finite field and $X$ and $Y$ are finite sets, two codes
(linear subspaces) $\sA$ and $\sB$ of $G^X$ and $G^Y$, respectively, are equivalent when they are
isometric for the Hamming metric (see \cite{McW:ii}).
This result has been generalized to convolutional codes in \cite{GluLue:2009}. % and it also
%makes sense in other areas, as for example functional analysis and linear dynamical systems
%(cf \cite{Fer_Her_Sha:Products,forney_trott:04,GluLue:2009,piret:1988}).
The main motivation of this research has been to extend MacWilliams Equivalence Theorem to more general
settings and explore the possible application of these methods to the study of convolutional codes
or linear dynamical systems. In order to do this, we apply topological methods as a main tool here and
%Throughout this paper, $X$ and $Y$ will designate to (infinite) sets, and $G$ will always be a (possibly discrete) metric group.
we will look at the possible application of this
abstract approach elsewhere.

Let $G$ be a metric group and let $\sA ut(G)$ denote the automorphism group of $G$.
Two groups of $G$-valued maps $\sA$ and $\sB$, defined on the sets $X$ and $Y$, respectively,
are \emph{compatible} if there is a group isomorphism
$H\colon\sA\to\sB$ such that there are two maps $h\colon Y\to X$ and $w\colon Y\to \sA ut (G)$
satisfying  $Hf(y)=w[y](f(h(y)))$ for all $y\in Y$ and $f\in \sA$.
In this case, we say that $H$ is represented as a \emph{weighted composition
operator}. Two compatible groups of $G$-valued maps $\sA$ and $\sB$ are said to be \emph{equivalent}
if the map $h\colon Y\to X$ is a bijection. It is not hard to verify that this notion defines an
equivalence relation on the set of all  groups of $G$-valued maps which are defined on an arbitrary but fixed set $X$.

A group isomorphism $H$ defined between two function groups $\sA$ and $\sB$ is called \emph{separating} when for each pair of maps
$f,g\in \sA$ satisfying that $f^{-1}(e_G)\cup g^{-1}(e_G)=X$, it holds that $(Hf)^{-1}(e_G)\cup (Hg)^{-1}(e_G)=Y$.
Our main result establishes that under some mild conditions, every separating group isomorphism can be represented as a weighted
composition operator. As a consequence we establish the equivalence of
two function groups if there is a biseparating isomorphism defined between them.

There are many precedents in the study the representation of group
homomorphisms for group-valued continuous functions. Among them, the following ones are relevant here
(cf \cite{cordeiro,eda kiyosa ohta,Fer_Her_Rod,her_rod:2007,Fer_Her_Gar:jmaa,martinez,ohta 96,Sha_Spe:2010,yang-1,yang}).
Most basic facts and notions related to topological properties may be
found in \cite{gill-jer,engel}.

\mkp

\section{Basic Facts}
Given  $T$, a topological space, and $A\subseteq T$, the symbols $\overline{A}$ and $int A$ will denote the standard closure and interior operators respectively.
Throughout this paper $G$ denotes a metric group, $e_G$ its neutral element and $X$ an infinite set.

\bdfn
Let $G^X$ be the set of all maps from $X$ into $G$, which is a group with the usual pointwise product as composition law.
Given $f\in G^X$, define $Z(f)=\{x\in X:f(x)=e_G\}$ and $coz(f)=X\setminus Z(f)$.
It is said that a subgroup $\sA$ of $G^X$ \emph{separates} points in $X$ if for any two distinct points $x_1,x_2\in X$ there is $f\in \mathcal{A}$ such that $f(x_1)\neq f(x_2)$ and
it is said that $\sA$ is \emph{faithful} if $X=\cup \{ coz(f) : f\in\sA \}$. A \emph{function group} in $G^X$ is a faithful subgroup $\sA$ of $G^X$
that separates the points in $X$.  As a consequence, given a function group $\sA$ in $G^X$, the set $X$ is equipped
with a Hausdorff, completely regular topology $\tau_{\mathcal{A}}$, which has the family $\{f^{-1}(U): U\text{ open in }G, f\in \mathcal{A}\}$
as an open subbase. Therefore $\mathcal{A}$ is a subgroup of $C(X,G)$, the group of all continuous functions from $X$ into $G$,
when $X$ is equipped with the $\tau_{\mathcal{A}}$-topology.

The following collection of subsets of $X$ will be essential in what follows:
(1) $\sC(\sA)=\{f^{-1}(C):f\in \sA,\ C\text{ is a closed subset of }G\}$;
(2) $\sO(\sA)=\{X\setminus Z: Z\in \sC(\sA)\}$; (3) $\mathcal{D}(\sA)\defi$ the minimum collection of subsets that contains $\sC(\sA)$
that is closed under finite unions and intersections (that is, the $\sigma$-ring of sets
generated by $\sC(\sA)$); (4) $\mathcal{E}(\sA)\defi$ the minimum collection of subsets that contains $\sO(\sA)$ that is closed under finite unions and intersections
(that is, the $\sigma$-ring of sets
generated by $\sO(\sA)$).
\edfn

\bdfn
A map $f\in\sA$ is said to be \emph{bounded} if $f(X)$ is relatively compact in $G$. The function group $\sA$ in $G^X$ is
\emph{bounded} if every $f\in\sA$ is bounded.
In general, if $\sA$ is a function group in $G^X$, the set $\sA^*\defi\{f\in \sA : f\ \text{is bounded}\}$ is the largest bounded subgroup of $\sA$.

%\bdfn
Let $\sA$ be function subgroup of $G^X$ and let $\kappa$ be an ordinal number. We say that $\sA$ is a \emph{$\kappa$-extension} of $\sA^*$ if
the following assertions hold:
\begin{enumerate}
\item[(i)] $\sD(\sA) = \sD(\sA^*)$.
\item[(ii)] $f\in\sA$ if there is $\{f_i:i<\kappa\}\subseteq\sA^*$,
such that $\{coz(f_i): i<\kappa\}$ is locally finite and $f=\prod\limits_{i<\kappa}f_i$.  %($f_i$ may be the null map).
\end{enumerate}

In general, we say that  $\sA$ is an \emph{extension} of $\sA^*$ if it a $\kappa$-extension for some undetermined cardinal $\kappa$.
Remark that if $\sA$ is a function group in $G^X$ that is an extension of $\sA^*$, then the latter is also a function group in $G^X$
and furthermore $\tau_{\sA}=\tau_{\sA^*}$.
\edfn

\section{Bounded function groups}
\bdfn\label{compactification}
Let $\sA$ be a bounded function group in $G^X$.  Then
the Cartesian product $\prod\limits_{f\in\mathcal{A}}\overline{f(X)}$, equipped with product topology
as a subspace of $G^{\mathcal{A}}$, is a compact Hausdorff topological space.
%the where the subset compact $\overline{f(X)}$ denotes the closure of $f(X)$ in $G$ for each $f\in\sA$.
Therefore, the diagonal map $$\varepsilon=\Delta_{f\in\sA}:X\rightarrow \prod\limits_{f\in\sA}\overline{f(X)}$$
%\noindent  such that
$$\varepsilon(x)=(f(x))_{f\in\sA}\ \text{for all}\ x\in X$$
\noindent injects $X$ into $\prod\limits_{f\in\mathcal{A}}\overline{f(X)}$ and $\pi_f\circ\varepsilon=f$ for all $f\in\sA$, where
$\pi_f$ denotes the canonical projection map from $G^{\sA}$ into $G$. The initial topology that $\varepsilon$ defines on $X$
(that is, the product topology) coincides with the \emph{$\tau_{\sA}$-topology} on $X$ defined in the paragraph above.

Set $\beta_{\sA}X\defi\overline{\varepsilon(X)}\subseteq \prod\limits_{f\in\sA}\overline{f(X)}$.
The pair $(\beta_{\sA}X,\varepsilon)$ is a Hausdorff compactification of $(X,\tau_{\sA})$, which has the following property:
for each $f\in \sA$ the map ${\pi_f}_{|\varepsilon(X)}:\varepsilon(X)\rightarrow f(X)$ has a unique continuous extension
$f^b:\beta_{\sA}X\rightarrow\overline{f(X)}$ such that
${f^b}_{|\varepsilon(X)}={\pi_f}_{|\varepsilon(X)}$.
Since ${\pi_f}_{|(\beta_{\sA}X)}$ satisfies this equality, we have that $f^b={\pi_f}_{|(\beta_{\sA}X)}$ and $f=f^b\circ\varepsilon$.

Set $\mathcal{A}^b\defi\{f^b:f\in\sA\}$. It is clear that the function group $\mathcal{A}^b$ separates the points in $\beta_{\sA}X$.
%Indeed, if $p,q\in\beta_{\sA}X\subseteq\prod\limits_{f\in\sA}\overline{f(X)}$ and $p\neq q$,
%then there is $f\in\sA$ such that $f^b(p)=\pi_f(p)\neq\pi_f(q)=f^b(q)$. Then $\beta_{\sA}X$ is Hausdorff.
Since $\varepsilon:X\rightarrow\varepsilon(X)$ is a homeomorphism onto its image, there is no loss of generality in assuming that
$X=\varepsilon(X)$ and $\tau_{\sA}$ is the topology that $X$ inherits from $\prod\limits_{f\in\sA}\overline{f(X)}$.
\edfn
\medskip

\brem
The definition of the space  $\beta_{\sA}X$ above is a generalization of the well known Stone-\v{C}ech compactification $\beta X$
associated to every Tychonoff space $X$. In fact, the properties and proofs presented here are inspired in those of $\beta X$
(cf. \cite{gill-jer}).
\erem

\brem
In the sequel, if $\sA$ is a bounded function group in $G^X$ and $D\in\mathcal{D}(\sA)\cup\sE(\sA)$,
the symbol $D^b$ will denote the subset of $\beta_{\sA}X$ canonically associated to $D$ by
replacing its defining maps in $\sA$ by their continuous extensions that belong to $\mathcal{A}^b$.
For instance, if $D=f^{-1}(F)$, then $D^b=(f^{b})^{-1}(F)$.
Furthermore, in order to avoid the proliferation of awkward notation, the symbol $\overline A$ will always mean the
closure of $A$ in the $\beta_{\sA}X$ unless expressly stated otherwise.
\erem

\bprp\label{PROPOSICION 1}

Let $\sA$ be a bounded function group on $G^X$. The following assertions hold:

1. $\mathcal{A}^b$ is a function subgroup of $C(\beta_{\sA}X,G)$.

2. $D^b\cap X=D$ for all $D\in \mathcal{D}(\sA)\cup\mathcal{E}(\sA)$.

3. $\overline{D}=\overline{D^b\cap X}\subseteq D^b$ for all $D\in \mathcal{D}(\sA)$.

4. $\overline{D}\cap int(D^b)=int(\overline{D})$ for all $D\in \mathcal{D}(\sA)$.

5. $int(\overline{Z(f)})=int(Z(f^b))=\beta_{\sA}X\setminus \overline{coz(f)}$ for all $f\in\sA$.

6. $int(D^b)=int(\overline{D})$ for all $D\in \mathcal{D}(\sA)$.

7. If $P$ and $Q$ are disjoint nonempty closed subsets of $\beta_{\sA}X$,
then there exist $D_P,D_Q\in \mathcal{D}(\sA)$ such that $P\subseteq int(\overline{D_P})$, $Q\subseteq int(\overline{D_Q})$ and $D_P^b\cap D_Q^b=\emptyset$.
\eprp
\bpf

1. Obvious.

2. Since $(f^b)^{-1}(A)\cap X=f^{-1}(A)$ for all $A\subseteq G$ and $f\in \sA$, then  $(f_1^b)^{-1}(A_1)\cap (f_2^b)^{-1}(A_2)\cap X=f_1^{-1}(A_1)\cap (f_2)^{-1}(A_2)$ and $((f_1^b)^{-1}(A_1)\cup (f_2^b)^{-1}(A_2))\cap X=f_1^{-1}(A_1)\cup (f_2)^{-1}(A_2)$, for each $A_i\subseteq G$ and $f_i\in\sA$, $1\leq i\leq 2$.

3. Apply item 2.

4. It is clear that $int(\overline{D})\subseteq \overline{D}\subseteq D^b$, which yields $int(\overline{D})\subseteq \overline{D}\cap int(D^b)$.
As for the reverse inclusion, take $p\in\overline{D}\cap int(D^b)$, then there exists an open subset $U$ in $\beta_{\sA}X$ such that
$p\in U\subseteq int(D^b)\subseteq D^b$ but $U\subseteq\overline{U\cap X}\subseteq\overline{D^b\cap X}=\overline{D}$ by item 3. As a consequence $p\in U\subseteq int(\overline{D})$.

5. Let us see that $int({Z(f^b)})=\beta_{\sA}X\setminus \overline{coz(f)}$, $f\in\sA$. Indeed, since $coz(f^b)$ is an open subset
and $X$ is dense in $\beta_{\sA}X$, we have $\overline{coz(f^b)}=\overline{coz(f^b)\cap X}=\overline{coz(f})$ by item 2. Then  $\beta_{\sA}X\setminus \overline{coz(f)}=\beta_{\sA}X\setminus \overline{coz(f^b)}\subseteq\beta_{\sA}X\setminus coz(f^b)=Z(f^b)$, thus $\beta_{\sA}X\setminus \overline{coz(f)}\subseteq int(Z(f^b))$.
Moreover $int(Z(f^b))$ is an open subset disjoint with $coz(f^b)$ and, as a consequence, with its closure.
Then $\beta_{\sA}X\setminus \overline{coz(f)}=int(Z(f^b))$. On the other hand %if $p'\in int(Z(f^b))$ then $int(Z(f^b))\cap X\subseteq Z(f^b)\cap X=Z(f)$.
%We have $p'\in
$int(Z(f^b))\subseteq\overline{int(Z(f^b))\cap X}\subseteq \overline{Z(f^b)\cap X}=\overline{Z(f)}$. %and hence $p\in int(\overline{Z(f)})$.
Thus, we have proved that $int(\overline{Z(f)})=int(Z(f^b))=\beta_{\sA}X\setminus \overline{coz(f)}$.

6. Let $p\in int(D^b)$ and let $U$ be an open subset in $\beta_{\sA}X$ such that $p\in U$. %\subseteq int(D^b)$.
Since $X$ is dense in $\beta_{\sA}X$, we have $\emptyset\neq U\cap X\cap int(D^b)\subseteq U\cap (D^b\cap X)=U\cap D$ and $p\in\overline{D}$.

7. Take $p\in P$ and $q\in Q$, respectively. Since $\mathcal{A}^b$ separates the points in $\beta_{\sA}X$,
there exists $f_{pq}\in \sA$ such that $f_{pq}^b(p)\neq f_{pq}^b(q)$. Since $G$ is metric there are two open neighborhoods $U_{pq}$ and $V_{pq}$ of $f_{pq}^b(p)$ and $f_{pq}^b(q)$ respectively in $G$ such that $\overline{U_{pq}}\cap\overline{V_{pq}}=\emptyset$ (closures in $G$). Then $p\in (f_{pq}^b)^{-1}(U_{pq})$ and $P\subseteq \bigcup\limits _{p\in P}(f_{pq}^b)^{-1}(U_{pq})$.
 $P$ being compact, there is a natural number $n_q$ such that $P\subseteq \bigcup\limits _{i=1}^{n_q}(f_{p_iq}^b)^{-1}(U_{p_iq})$ for each $q\in Q$. Then $q\in\bigcap\limits _{i=1}^{n_q}(f_{p_iq}^b)^{-1}(V_{p_iq})$ and $Q\subseteq\bigcup\limits_{q\in Q}\bigcap\limits _{i=1}^{n_q}(f_{p_iq}^b)^{-1}(V_{p_iq})$. $Q$ being compact,
 there is a natural number $m$ such that $Q\subseteq\bigcup\limits_{j=1}^m\bigcap\limits _{i=1}^{n_{q_j}}(f_{p_iq_j}^b)^{-1}(V_{p_iq_j}):=E_Q^b\subseteq\bigcup\limits_{j=1}^m\bigcap\limits _{i=1}^{n_{q_j}}(f_{p_iq_j}^b)^{-1}(\overline{V_{p_iq_j}}):=D_Q^b$.
 Moreover,
 $P\subseteq\bigcap\limits_{j=1}^m\bigcup\limits _{i=1}^{n_{q_j}}(f_{p_iq_j}^b)^{-1}(U_{p_iq_j}):=E_P^b\subseteq\bigcap\limits_{j=1}^m\bigcup\limits _{i=1}^{n_{q_j}}(f_{p_iq_j}^b)^{-1}(\overline{U_{p_iq_j}}):=D_P^b$.
 Since $\overline{U_{p_iq_j}}\cap \overline{V_{p_iq_j}}=\emptyset$, $1\leq i\leq n_{q_j}$ and $1\leq j\leq m$, we have $D_P^b\cap D_Q^b=\emptyset$.
  Therefore $Q\subseteq E_Q^b\subseteq D_Q^b$ and $E_Q^b$ is open in $\beta_{\sA}X$, that is $Q\subseteq int(D_Q^b)$ and, by item 6., $Q\subseteq int(\overline{D_Q})$.
  The same argument proves that $P\subseteq int(\overline{D_P})$.
\epf

\bcor\label{CORO1}
Let $P$ and $U$ be a closed and an open subsets of $\beta_{\sA}X$ respectively, such that $P\subseteq U$. Then there are two decreasing sequences $\{E_j\}_{j<\omega}\subseteq \mathcal{E}(\sA)$ and $\{D_j\}_{j<\omega}\subseteq \mathcal{D}(\sA)$ such that $P\subseteq E_j^b\subseteq int(D_j^b)\subseteq D_j^b\subseteq E^b_{j-1}\subseteq U$.
\ecor
\bpf
Applying the proof of Proposition \ref{PROPOSICION 1}.7, since $P$ and $\beta_{\sA}X\setminus U$ are disjoint compact subsets,
there are $E_1\in\mathcal{E}(\sA)$ and $D_1\in\mathcal{D}(\sA)$ such that $P\subseteq E_1^b\subseteq int(D_1^b)\subseteq D_1^b\subseteq U$.
Now $P$ and $\beta_{\sA}X\setminus E_1^b$ are disjoint compact subsets again and, consequently,
there are $E_2\in\mathcal{E}(\sA)$ and $D_2\in\mathcal{D}(\sA)$ such that $P\subseteq E_2^b\subseteq int(D_2^b)\subseteq D_2^b\subseteq E_1^b$.
The proof now follows by induction.
\epf
%\mkp

\bdfn\label{NORMAL}
 A function group $\sA$ in $G^X$ is \emph{normal}  if for each two disjoint subsets $D_1,D_2\in\mathcal{D}(\sA)$
 there are maps   $f_{ij}\in\mathcal{A^*}$ and disjoint closed subsets $F_{ij}^{(1)}, F_{ij}^{(2)}\subseteq G$, $1\leq i\leq n_j, 1\leq j\leq m$, such that
\begin{eqnarray*}
D_1\subseteq \bigcup_{j=1}^m\bigcap_{i=1}^{n_j}f_{ij}^{-1}(F_{ij}^{(1)}) & &\\
D_2\subseteq \bigcap_{j=1}^m\bigcup_{i=1}^{n_j}f_{ij}^{-1}(F_{ij}^{(2)}). & &
\end{eqnarray*}

%\noindent More generality, it is said that the pair $(X,\sA)$ is \emph{normal} if $\sA$ is a normal, bounded function group in $G^X$,
%and the latter is equipped with the $\tau_{\mathcal{A}}$-topology.
\edfn
\medskip

In the following proposition, the closure $\overline A$ of every subset $A\subseteq X$ is understood in $\beta_{\sA^*}X$.

\begin{prop}\label{CLAUVACIA}
Let $\sA$ is a normal function group in $G^X$ that is an extension of $\sA^*$ and let
$\{D_i\}_{i=1}^n$ be a finite subset of $\mathcal{D}(\sA)$. The following assertions hold true:
\begin{itemize}
\item[(a)]  $\bigcap\limits_{i=1}^n D_i=\emptyset$\ implies\ $\bigcap\limits_{i=1}^n\overline{D_i}=\emptyset$.\\

\item[(b)] $\overline{\bigcap\limits_{i=1}^n D_i}=\bigcap\limits_{i=1}^n\overline{D_i}$.
\end{itemize}
\end{prop}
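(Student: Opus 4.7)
The plan is to prove (a) and (b) together, with the entire argument reducing to the case $n=2$. Once (b) is established for two sets, iterating the identity $\overline{A\cap B}=\overline{A}\cap\overline{B}$ along $\overline{D_1\cap D_2\cap\cdots\cap D_n}=\overline{D_1\cap(D_2\cap\cdots\cap D_n)}$ (and using that $\mathcal{D}(\sA)$ is closed under finite intersections) yields (b) for arbitrary $n$; statement (a) at level $n$ is then immediate, since $\bigcap D_i=\emptyset$ forces $\bigcap\overline{D_i}=\overline{\bigcap D_i}=\overline{\emptyset}=\emptyset$. So the real content is the two-set case.

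For (a) with $n=2$: given disjoint $D_1,D_2\in\mathcal{D}(\sA)$, normality supplies $f_{ij}\in\sA^*$ and pairwise disjoint closed $F_{ij}^{(1)},F_{ij}^{(2)}\subseteq G$ as in Definition \ref{NORMAL}. Passing to the continuous extensions $f_{ij}^b\in\sA^{*b}$, I form
\[
A_1:=\bigcup_j\bigcap_i (f_{ij}^b)^{-1}(F_{ij}^{(1)}),\qquad A_2:=\bigcap_j\bigcup_i (f_{ij}^b)^{-1}(F_{ij}^{(2)}),
\]
both closed in $\beta_{\sA^*}X$, with $D_1\subseteq A_1$ and $D_2\subseteq A_2$. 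A routine combinatorial check using $F_{ij}^{(1)}\cap F_{ij}^{(2)}=\emptyset$ shows $A_1\cap A_2=\emptyset$: a common point $x$ would have to witness some index $j_0$ with $f_{ij_0}^b(x)\in F_{ij_0}^{(1)}$ for all $i$, while also producing some $i_0$ with $f_{i_0j_0}^b(x)\in F_{i_0j_0}^{(2)}$, a contradiction. Hence $\overline{D_1}\cap\overline{D_2}\subseteq A_1\cap A_2=\emptyset$.

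For (b) with $n=2$, only $\overline{D_1}\cap\overline{D_2}\subseteq\overline{D_1\cap D_2}$ requires work. Suppose, for contradiction, that some $x\in\overline{D_1}\cap\overline{D_2}$ admits an open neighborhood $U$ in $\beta_{\sA^*}X$ with $U\cap D_1\cap D_2=\emptyset$. Applying Corollary \ref{CORO1} to the closed singleton $\{x\}$ and the open set $U$ produces $D'\in\mathcal{D}(\sA^*)=\mathcal{D}(\sA)$ with
\[
x\in int((D')^{b})\subseteq (D')^{b}\subseteq U.
\]
Then $D'\cap D_1\cap D_2\subseteq U\cap D_1\cap D_2=\emptyset$, so the members $D'\cap D_1$ and $D_2$ of $\mathcal{D}(\sA)$ are disjoint; by the two-set case of (a) just proved, $\overline{D'\cap D_1}\cap\overline{D_2}=\emptyset$. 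On the other hand, every open neighborhood $V$ of $x$ refines to the open neighborhood $V\cap int((D')^{b})$, which meets $D_1$ (since $x\in\overline{D_1}$) at a point necessarily lying in $(D')^{b}\cap X=D'$ by Proposition \ref{PROPOSICION 1}.2; hence $x\in\overline{D'\cap D_1}$, contradicting $x\in\overline{D_2}$.

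The main obstacle is converting a purely topological neighborhood separation of $x$ from $D_1\cap D_2$ into a separation by a set in the algebraic family $\mathcal{D}(\sA)$; this is exactly what Corollary \ref{CORO1}, built on item 7 of Proposition \ref{PROPOSICION 1}, supplies. Once the witness $D'$ is available, the combinatorial normality encoded in the two-set case of (a) is turned on the pair $(D'\cap D_1,D_2)$, and the argument closes in a single step.
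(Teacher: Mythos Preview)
Your proof is correct and follows essentially the same route as the paper: reduce to $n=2$, handle (a) by lifting the normality data to disjoint closed sets $A_1,A_2$ in $\beta_{\sA^*}X$, and handle (b) by producing an auxiliary $D'\in\mathcal{D}(\sA)$ around the bad point that misses $D_1\cap D_2$. Your closing step is in fact tidier than the paper's: by explicitly verifying $x\in\overline{D'\cap D_1}$ via density and then invoking (a) only for the pair $(D'\cap D_1,\,D_2)$, you avoid the paper's apparent appeal to the three-set case of (a), which at that stage has not yet been justified.
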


\begin{proof}
Since $\sD$ is closed under finite intersections, there is no loss of generality in assuming that $n=2$.

\noindent (a) Reasoning by contradiction, suppose there exists $p\in\overline{D_1}\cap \overline{D_2}\not=\emptyset$.
Since $\sA$ is normal, we have $D_1\subseteq \bigcup_{j=1}^m\bigcap_{i=1}^{n_j}f_{ij}^{-1}(F_{ij}^{(1)})$,
$D_2\subseteq \bigcap_{j=1}^m\bigcup_{i=1}^{n_j}f_{ij}^{-1}(F_{ij}^{(2)})$ and $F_{ij}^{(1)}\cap F_{ij}^{(2)}=\emptyset$,
for some maps $f_{ij}\in\sA^*$.
The subsets $(D_{ij}^{(1)})^b=(f_{ij}^b)^{-1}(F_{ij}^{(1)})$ and $(D_{ij}^{(2)})^b=(f_{ij}^b)^{-1}(F_{ij}^{(2)})$ are clearly disjoint in $\beta_\sA X$.
Moreover, since $p\in\overline{D_1}\subseteq D_1^{b}$, there is $j_0$ such that $p\in\bigcap_{i=1}^{n_{j_0}}(D_{ij_0}^{(1)})^b$,
in like manner $p\in\overline{D_2}\subseteq D_2^{b}$, which implies that $p\in\bigcup_{i=1}^{n_{j_0}}(D_{ij_0}^{(2)})^b$.
This is a contradiction because $(D_{ij_0}^{(1)})^b\cap(D_{ij_0}^{(2)})^b=\emptyset$.\mkp

\noindent (b) It will suffice to prove that $\overline{D_1}\cap \overline{D_2}\subseteq\overline{D_1\cap D_2}$.
 Assuming the opposite, suppose there exists
 $q\in\overline{D_1}\cap \overline{D_2}\setminus\overline{D_1\cap D_2}\subseteq \beta_{\mathcal{A}}X\setminus \overline{D_1\cap D_2}$,
 where $\beta_{\mathcal{A}}\setminus \overline{D_1\cap D_2}$ is open in $\beta_{\mathcal{A}}X$.
If $D_1\cap D_2=\emptyset$, then $\overline{D_1}\cap \overline{D_2}=\emptyset$ by (a), which is a contradiction.

So, we may assume without loss of generality that $D_1\cap D_2\neq\emptyset$. Clearly, we have that $p\neq q$ for all $p\in\overline{D_1\cap D_2}$.
 Therefore, for every $p\in\overline{D_1\cap D_2}$, there exists $f_{pq}\in\sA^*$ such that $f_{pq}^b(p)\neq f_{pq}^b(q)$.
 Since $\overline{D_1\cap D_2}$ is compact and $G$ es metric, there are
 $p_1,\dots,p_n\in\overline{D_1\cap D_2}$ and $V_1,\ldots,V_n$ neighborhoods of $f_{p_i q}(q)$ in $G$ such that
 $q\in\bigcap\limits_{i=1}^n(f_{p_i q}^b)^{-1}(V_i):=E_{q}^{b}\subseteq\bigcap\limits_{i=1}^{n}(f_{p_i q}^b)^{-1}(\overline{V_i}):=D_q^b$ and
 $D_q^b\cap\overline{D_1\cap D_2}=\emptyset$. Hence $D_q\cap D_1\cap D_2=\emptyset$.

 On the other hand $q\in \overline{D_q}$ and $D_q\in \sD(\sA)$. Thus $q\in\overline{D_q}\cap \overline{D_1}\cap\overline{D_2}$, which implies
 $D_q\cap D_1\cap D_2\not=\emptyset$. This is contradiction that completes the proof.
 \end{proof}

\bdfn\label{controlable}

 A function group $\mathcal{A}$ in $G^X$ is \textit{controllable} if for every $f\in\mathcal{A}$ and
 $D_1, D_2\in \mathcal{D}(\sA)$ with $D_1\cap D_2=\emptyset$, there is $f'\in\mathcal{A}$ and $E\in \mathcal{E}(\sA)$ such that
 $D_1\subseteq E \subseteq X\setminus D_2$, $f_{|_{D_1}}=f'_{|_{D_1}}$, and $f'_{|_{Z(f)\cup(X\setminus E)}}=e_G$.

\brem \label{control}
From the definition above, it follows that if $f''=f'f^{-1}\in \sA$ then $f''_{|_{D_1\cup Z(f)}}=e_G$ and
$f''_{|_{(X\setminus E)}}=f^{-1}_{|_{(X\setminus E)}}$.
\erem

\edfn
\bdfn\label{support}
Let $\sA$ be a bounded function group in $G^X$ and let $\phi:\sA\rightarrow G$ be a group homomorphism.  %, where $\mathcal{A}$ is a subgroup of $G^X$
 A closed subset $S\subseteq\beta_{\sA}X$ is a \emph{weak support} for $\phi$ if for every $f\in\sA$ such that $S\subseteq int(\overline{Z(f)})$,
 it holds $\phi(f)=e_G$. $S$ is a \emph{support} for $\phi$ if for every $f\in\sA$ such that $S\subseteq Z(f^b)$, it holds  $\phi(f)=e_G$.
\edfn
\mkp

Weak support subsets satisfy the following properties.

\bprp\label{PROPOSICION 2}

Let $\sA$ be a bounded function group in $G^X$ and let $\phi:\sA\rightarrow G$
 be a non-null  group homomorphism. The following assertions hold:

1. $\beta_{\sA}X$ is a (weak) support for $\phi$.

2. The empty set is not a (weak) support for $\phi$.

3. If $S$ is a (weak) support for $\phi$ and $S\subseteq R$ then $R$ is a (weak) support for $\phi$.

4. Let $S$ be a weak support for $\phi$, $A\subseteq X$ and $f_1,f_2\in\sA$ such that $S\subseteq int(\overline{A})$ and ${f_1}_{|_A}={f_2}_{|_A}$. Then $\phi(f_1)=\phi(f_2)$.

If in addition $\sA$ is controllable we have:

5. If $S$ and $R$ are weak supports for $\phi$ then $S\cap R\neq\emptyset$.
\eprp
\bpf
1. If $\beta_{\sA}X\subseteq int(\overline{Z(f)})$, $f\in\beta_{\sA}X$, then $X=\beta_{\sA}X\cap X\subseteq int(\overline{Z(f)})\cap X\subseteq Z(f^b)\cap X=Z(f)$ and $f= e_G$. Then $\phi(f)=\phi(e_G)=e_G$.

2. and 3. are obvious.
%Suppose $\emptyset$ is a  support for $\phi$. Since $\emptyset\subseteq int(\overline{Z(f)})$ for all $f\in\beta_{\sA}X$, then $\phi(f)=e_G$ for all $f\in\beta_{\sA}X$, which is a contradiction.

4. Set $f=f_1f_2^{-1}$. Then $S\subseteq int(\overline{A})\subseteq int(\overline{Z(f)})$, which yields $\phi(f_1f_2^{-1})=\phi(f)=e_G$. Therefore $\phi(f_1)=\phi(f_2)$.

5. Let $S$ and $R$ be weak supports for $\phi$ and suppose that $S\cap R=\emptyset$.
By Proposition \ref{PROPOSICION 1}.7, there are two subsets $D_S$ and $D_R$ in $\mathcal{D}(\sA)$ such that $S\subseteq int(\overline{D_S})$, $R\subseteq int(\overline{D_R})$ and $D_S^b\cap D_R^b=\emptyset$. Take $f\in\sA$ such that $\phi(f)\neq e_G$ and apply that $\sA$ is controllable to obtain $E\in \mathcal{E}(\sA)$ and $f'\in\sA$ such that $D_S\subseteq E\subseteq X\setminus D_R$, $f'_{|_{D_S}}=f_{|_{D_S}}$ and $Z(f)\cup(X\setminus E)\subseteq Z(f')$. Then by item 4. $\phi(f')=\phi(f)\neq e_G$ and, since $R\subseteq int(\overline{D_R})\subseteq int(\overline{Z(f')})$ then $\phi(f')=e_G$, which is a contradiction.
\epf

\bdfn
Let $\sA$ be a function group in $G^X$. Two maps $f,g\in \sA$ are \textit{separated} (resp. \emph{detached}) if $coz(f)\cap coz(g)=\emptyset$
(resp. if there are $D_1,D_2\in \mathcal{D}(\sA)$ such that $coz(f)\subseteq D_1$,
$coz(g)\subseteq D_2$ and $D_1\cap D_2=\emptyset$).\mkp
%\edfn

%\bdfn\label{separadordebil}
\noindent A group homomorphism $\phi:\sA\rightarrow G$ is \textit{separating} (resp. \textit{weakly separating})  if
 for every separated (resp. detached) maps $f,g\in\sA$, it holds that either $\phi(f)=e_G$ or $\phi(g)=e_G$.
\edfn
Notice that every separating homormorphism $\phi$ is weakly separating.
%\mkp

\bprp\label{detaching}
  Let $\sA$ be a function group in $G^X$ and let $\phi\colon\sA\to G$ be a weakly separating homomorphism.
Then for every $f_1, f_2$ in $\sA$ such that $\overline{coz(f_1)}\cap \overline{coz(f_2)}=\emptyset$,
either $\phi(f_1)=e_G$ or $\phi(f_2)=e_G$ (here, the closures are taken in $\beta_{\sA}X$).
If in addition $\sA$ is normal, then the converse implication is also true.
\eprp

\bpf

($\Rightarrow$)  (Notice that the normality of $\sA$ is not needed in this implication). Let $f_1,f_2\in\sA$
such that $\overline{coz(f_1)}\cap\overline{coz(f_2)}=\emptyset$. By Proposition \ref{PROPOSICION 1}.7,
there is $D_i\in\sD(\sA^*)$ such that $\overline{coz(f_i)}\subseteq int(\overline{D_i})\subseteq {D}_i^b$, $1\leq i\leq 2$,
and ${D}_1^b\cap {D}_2^b=\emptyset$. Since $\phi$ is weakly separating and $coz(f_i)\subseteq {D}_i^b\cap X=D_i$, $1\leq i\leq 2$,
it follows that either $\phi(f_1)=e_G$ or $\phi(f_2)=e_G$.

($\Leftarrow$) Let $f_1,f_2\in\sA$ be two detached maps in $\sA$.  Then there are $D_i\in\sD(\sA)$ such that $coz(f_i)\subseteq D_i$, $1\leq i\leq 2$,
and $D_1\cap D_2=\emptyset$. By Proposition \ref{CLAUVACIA}, we have $\overline{coz(f_1)}\cap\overline{coz(f_2)}\subseteq \overline{D_1\cap D_2}=\emptyset$.
Therefore, either $\phi(f_1)=e_G$ or $\phi(f_2)=e_G$.
\epf
\medskip

Now, we will prove that every non-null weakly separating group homomorphism
$\phi:\sA\rightarrow G$, where $\sA$ is a controllable, bounded, function group, has the smallest possible
weak support set. For that purpose set
$\mathcal{S}= \{S\subseteq\beta_{\sA}X: S \text{ is a weak support for }\phi\}$.

There is a canonical partial order that can be defined on $\mathcal{S}$:
$S\leq R$ if and only if $R\subseteq S$. A standard argument shows that $(\mathcal{S},\leq)$ is an inductive set and, by Zorn's lemma,
$\mathcal{S}$ has a $\subseteq$-minimal element $S$. Furthermore, this minimal
element is in fact a minimum because of the next proposition.

\bprp\label{PROPOSICION_3}

 Let $\sA$ be a bounded function group in $G^X$ and let $\phi:\sA\rightarrow G$ be a non-null weakly separating group homomorphism.
 If $\sA$ is controllable then the minimum
element of $\mathcal{S}$ is a singleton.
\eprp
\bpf
Let $S$ a minimal element of $\mathcal{S}$, which is nonempty by Proposition \ref{PROPOSICION 2}.2.
Suppose now that there are two different points $p_1,p_2$ in $S$. By Proposition \ref{PROPOSICION 1}.7,
we can select two subsets $D_1,D_2\in \mathcal{D}(\sA)$ such that $p_1\in int(\overline{D_1})$, $p_2\in int(\overline{D_2})$ and $D_1^b\cap D_2^b=\emptyset$.
Since $S$ is minimal, the subset $S\setminus int(\overline{D_i})$ is a compact subset that is not a weak support for $\phi$, $1\leq i\leq 2$.
Hence, there is $f_i\in\sA$ such that $S\setminus int(\overline{D_i})\subseteq int(\overline{Z(f_i)})$ and $\phi(f_i)\neq e_G$, $1\leq i\leq 2$.
Then $S\subseteq int(\overline{Z(f_1)})\cup int(\overline{Z(f_2)}):=U$ and, by Proposition \ref{detaching}, we have  $\emptyset\neq \overline{coz(f_1)}\cap \overline{coz(f_2)}=:C$.
Since, by Proposition \ref{PROPOSICION 1}.5, $int(\overline{Z(f_i)})=\beta_{\sA}X\setminus \overline{coz(f_i)}$,  $1\leq i\leq 2$,
we have $S\cap C\subseteq U\cap C=\emptyset$.  Applying Proposition \ref{PROPOSICION 1}.7 again, there are $D_S,D_C\in \mathcal{D}(\sA)$
such that $S\subseteq int(\overline{D_S})$, $C\subseteq int(\overline{D_C})$ and $D_S^b\cap D_C^b
=\emptyset$.

We now apply that $\sA$ is controllable to $D_S$, $D_C$ and $f_1$ in order to obtain a subset $E\in \mathcal{E}(\sA)$ and a map $f'_1\in \sA$ such that
$D_S\subseteq E\subseteq X\setminus D_C$, ${f'_{1}}_{|{D_S}}={f_1}_{|{D_S}}$ and ${f'_1}_{|{Z(f_1)\cup(X\setminus E)}}=e_G$.
By Proposition \ref{PROPOSICION 2}.4, we have $\phi(f'_1)=\phi(f_1)\neq e_G$ and, by Proposition \ref{detaching}, $\overline{coz(f'_1)}\cap \overline{coz(f_2)}\neq\emptyset$.
On the other  hand, $Z(f_1)\subseteq Z(f'_1)$ and $C\subseteq int(\overline{D_C})\subseteq int(\overline{Z(f'_1}))$.
We have got a contradiction because $\emptyset\neq \overline{coz(f'_1)}\cap \overline{coz(f_2)}\subseteq \overline{coz(f_1)}\cap \overline{coz(f_2)}= C\subseteq int(\overline{Z(f'_1)})$,
which is impossible by Proposition \ref{PROPOSICION 1}.5.

We have proved that $|S|=1$. Moreover, if $R$ is a weak support for $\phi$, then by Proposition \ref{PROPOSICION 2}.5,
$S\subseteq R$. Thus the set $S$ is the minimum element in $\mathcal{S}$ and this complete the proof.
\epf
\bigskip

\section{Extension of bounded function groups}

%We say that the function group $\sA$ is an \emph{extension} of $\sA$ if $\sC(\sA)=\sC(\sA)$, which means that $\sA^*$ is also a function group.
In this section, we are concerned with functions groups that are extensions of  their subgroups of bounded functions.
Therefore, every function group $\sA$ is assumed to be a $\kappa$-extension of its bounded subgroup $\sA^*$
for some undetermined cardinal number $\kappa$ from here on.

%Here $\tau_{\sA}$ is the topology on $X$ where a subbase consists in sets of the form $f^{-1}(F)$, where $f\in\sA$ and $F$ is a closed subset of $G$. Then $\sA\subseteq C(X,G)$.
 Observe that given a bounded function group $\sA^*$, there is a \emph{largest $\kappa$-extension} canonically associated to $\sA^*$ for every cardinal number $\kappa$; namely
 $$ext_\kappa(\sA^*)\defi \{f\in G^X : f=\prod\limits_{i<\kappa}f_i : f_i\in\sA^*,\ \text{with}\ \{coz(f_i):i<\kappa\}\ \text{locally finite}\}.$$
\medskip

In the sequel, we deal with the Hausdorff compactification $\beta_{\sA^*}X$ associated
to the bounded function subgroup $\sA^*\subseteq\sA$ on a set $X$. From here on, the symbol $\overline A$ will mean the closure of $A$ in the $\beta_{\sA^*}X$
unless expressly stated otherwise. The next result will be used several times along the paper. We omit its easy verification.

 \bprp\label{complemento}
Let $\sA$ be a function group in $G^X$ such that  $\tau_\sA=\tau_{\sA^*}$. Then $int(\overline{Z(f)}) =\beta_{\sA^*}X\setminus\overline{coz(f)}$ for all $f\in\sA$.
\eprp

%\bpf
%First we claim that $\overline{F}\cap X=F$ for all closed subset $F$ in $X$. Indeed, supose by contradiction there exists $x\in (\overline{F}\cap X)\setminus F$.
%Then $x\in X\setminus F$, which is open in $X$. Thus, there is an open subset $V$ in $\beta_{\sA^*}X$ such that $V\cap X=X\setminus F$.
%As a consequence $V\cap F=V\cap X\cap F=(X\setminus F)\cap F=\emptyset$, which is a contradiction because $x\in \overline{F}$.

%Let $f\in\sA$. Since $Z(f)$ is closed in $X$, we have $\overline{Z(f)}\cap X=Z(f)$. As a consequence
%$int(\overline{Z(f)})\cap coz(f)\subseteq \overline{Z(f)}\cap X\cap coz(f)=Z(f)\cap coz(f)=\emptyset$ and,
%since $int(\overline{Z(f)})$ is open in $\beta_{\sA^*}X$, we have that $int(\overline{Z(f)})\cap \overline{coz(f)}=\emptyset$.
%\epf

The definition of \emph{support set} (resp.  \emph{weak support set}) for a general function group $\sA$ is analogous to
the definition given for bounded functions groups in  Definition \ref{support}. So, given a group homomorphism
$\phi\colon \sA\to G$, we say that
a closed subset $S\subseteq\beta_{\sA^*}X$ is a \emph{weak support} for $\phi$ if for every $f\in\sA$ such that $S\subseteq int(\overline{Z(f)})$,
it holds $\phi(f)=e_G$. The set $S\subseteq X$ is a \emph{support} for $\phi$ if for every $f\in\sA$ such that $S\subseteq Z(f)$, it holds  $\phi(f)=e_G$.

  \blem\label{extensionsoporte} Let $\sA$ be a function group in $G^X$ that is an extension of its bounded subgroup $\sA^*$, and
let $\phi\colon \sA\to G$ be a weakly separating group homomorphism such that $\phi|_{\sA^*}$ is non-null and $\sA^*$ is controllable.
Then the following assertions hold:
\begin{itemize}
\item[(a)] If $S\subseteq \beta_{\sA^*}X$ is a weak support for $\phi|_{\sA^*}$ then $S$ is a weak  support for $\phi$.
\item[(b)] If a $\tau_{\sA}$-compact subset $S\subseteq X$ is a support for for $\phi|_{\sA^*}$ then $S$ is a  support for $\phi$.
\end{itemize}
\elem
\begin{proof}

(a) Suppose that $S$ is a support for $\phi|_{\sA^*}$ but not a support for $\phi$. Then there is $f\in\sA$ such that $S\subseteq int(\overline{Z(f)})$ and $\phi(f)\neq e_G$.
By Corollary \ref{CORO1} and Proposition 3.2.6,  we can take $D_1,D_2\in\sD(\sA)$ and $E_1\in\sE(\sA)$ such that
$S\subseteq int(\overline{D_2})\subseteq D_2^b\subseteq E_1^b\subseteq D_1^b\subseteq int(\overline{Z(f)})$.

On the other hand, since $\phi|_{\sA^*}$ is non-null, there is $f_1\in \sA^*$ such that $\phi(f_1)\neq e_G$.
Applying that $\sA^*$ is controllable to $f_1$, $D_2$ and $D^{(2)}=X\setminus E_1$, we obtain
${f_1'}\in\sA^*$ and  $E\in\sE(\sA)$ such that $D_2\subseteq E\subseteq E_1$, $f'_1|_{D_2}=f_1|_{D_2}$ and ${f_1'}|_{Z(f_1)\cup(X\setminus E)}=e_G$.
Then, by Proposition \ref{PROPOSICION 2}.4, $\phi({f'}_1)=\phi(f_1)\neq e_G$ and $coz(f'_1)\subseteq E\subseteq D_1$.
Hence $\overline{coz(f'_1)}\subseteq D_1^b$ and
$\overline{coz(f'_1)}\cap \overline{coz(f)}\subseteq D_1^b\cap (\beta_{\sA^*}X\setminus int(\overline{Z(f)}))=\emptyset$,
which is a contradiction by Proposition \ref{detaching}.

(b) Assume that the $\tau_{\sA}$-compact subset $S\subseteq X$ is a support set for $\phi|_{\sA^*}$ and take $f\in \sA$ such that $f|_{S}=e_G$.
Since $\sA$ is an extension of $\sA^*$,
we have $f=\prod\limits_{i\in I} f_i$ with $f_i\in\sA^*$ for all $i\in I$ and the family $\{coz(f_i):i\in I\}$ being locally finite in $X$.
Now, since $S$ is compact, there is an open subset
$U\in \tau_\sA$ that contains $S$ and intersects finitely many members of $\{coz(f_i):i\in I\}$. That is, there is a finite subset $J\subseteq I$ such that
$U\cap coz(f_i)=\emptyset$ for each $i\in I\setminus J$. Set $f_J\defi \prod \{f_i : i\in J\}$
and $f^J\defi \prod \{f_i : i\in I\setminus J\}$.
We have $f=f_J\cdot f^J$, which yields $\phi(f)= \phi(f_J)\cdot \phi(f^J)$. Now, remark that $S\subseteq int(\overline{Z(f^J)})$,
which implies that $\phi(f^J)=e_G$ by assertion (a). On the other hand, $f_J\in\sA^*$ and $f_J|_S=f|_S=e_G$,
which yields $\phi(f_J)=e_G$. Thus $\phi(f)=e_G$, which implies that $S$ is a support set for $\phi$.
\end{proof}
%\bkp

\bprp\label{LEMA 1}
Let $\sA$ be a function group in $G^X$ that is an extension of its bounded subgroup , and let
$\phi:\mathcal{A}\rightarrow G$ be a weakly separating group homomorphism. If $\phi|_{\sA^*}$ is non-null and
continuous with respect to the pointwise convergence topology, and $\sA^*$ is controllable,
then the minimum weak support for $\phi|_{\sA^*}$ is a support for $\phi$ and is placed in $X$.
\eprp
\bpf
By Lemma \ref{extensionsoporte}, it will suffice to prove that the minimum weak support for
$\phi|_{\sA^*}$ is a support set for $\phi|_{\sA^*}$ and that is placed in $X$.
Let $\{p\}$ be the minimum weak support for $\phi|_{\sA^*}$. Reasoning by contradiction, suppose $p\notin X$
and let $F\in Fin(X)$, where $Fin(X)$ denotes the set of all finite subsets of $X$, (partially) ordered by inclusion.
By Proposition \ref{PROPOSICION 1}.7 there are $D_p,D_F\in\mathcal{D}(\sA)$ such that $p\in int(\overline{D_p})$, $F\in int(\overline{D_F})$ and $D_p^b\cap D_F^b=\emptyset$.
Since $\phi|_{\sA^*}$ is non-null, there exists $f_0\in\mathcal{A^*}$ such that $\phi(f_0)\neq e_G$. Applying the controllability of $\mathcal{A^*}$,
we can take $f_F\in\mathcal{A^*}$ and $E\in\mathcal{E}(\sA)$ such that $D_F\subseteq E\subseteq X\setminus D_p$, ${f_F}_{|{D_F}}={f_0}_{|{D_F}}$
and ${f_F}_{|{Z(f_0)\cup(X\setminus E)}}=e_G$. Then ${f_F}_{|{D_p}}=e_G$, which implies, by Proposition \ref{PROPOSICION 2}.4,
that $\phi(f_F)=e_G$. %Since $Fin(X)=\{F\subseteq X: F\text{ is finite}\}$ is an inductive set by $\subseteq$, then
Therefore the net $(f_F)_{F\in Fin(X)}$ converges pointwise to $f_0$ in $\mathcal{A^*}$.
As a consequence $(\phi(f_F))_{F\in Fin(X)}=(e_G)$ converges to $\phi(f_0)\neq e_G$, which is a contradiction. Thus $p\in X$.

Let us now verify that $\{p\}$ is a support for $\phi|_{\sA^*}$. Take $f\in\mathcal{A^*}$ such that $p\in Z(f^b)$. Then $p\in Z(f^b)\cap X=Z(f)$.

We have $\{p\}=\bigcap\{\overline{D}:p\in int(\overline{D}),D\in\mathcal{D}(\sA)\}$. Indeed, if $q\neq p$, $q\in\beta_{\mathcal{A}^*}X$,
then by Proposition \ref{PROPOSICION 1}.7, there are $D_q,D_p\in\mathcal{D}(\sA)$ such that $q\in int(\overline{D_q})$, $p\in int(\overline{D_p})$ and $D_q^b\cap D_p^b=\emptyset$.
Hence $q\in\beta_{\mathcal{A}^*}X\setminus \overline{D_p}$ and $q\in\bigcup\{\beta_{\mathcal{A}^*}X\setminus\overline{D}:p\in int(\overline{D}),D\in\mathcal{D}(\sA)\}$.
That is, the set $\mathcal{D}_p=\{D\in\mathcal{D}(\sA):p\in int(\overline{D})\}$ is a neighborhood base at $p$. %and it is an inductive set by $\supseteq$.
Let $D\in\sD_p$, then by Corollary \ref{CORO1} there is $E_1(D)\in\sE(\sA)$ and $D_2(D),D_1(D)\in\sD(\sA)$ such that $p\in D_2(D)\subseteq E_1(D)\subseteq D_1(D)\subseteq int(\overline{D})$. We now apply Remark \ref{control} to $f^{-1}$, $D_2(D)$ and $D_o(D)=X\setminus E_1(D)$ to obtain $f_D\in\sA^*$ and  $E(D)\in\sE(\sA)$ such that $D_2(D)\subseteq E(D)\subseteq E_1(D)$, ${f_D}_{|D_2(D)\cup Z(f)}=e_G$ and ${f_D}_{|X\setminus E(D)}=f_{|X\setminus E(D)}$, which implies ${f_D}_{|(X\setminus D)}=f_{|(X\setminus D)}$.

We claim that $(f_D)_{D\in\mathcal{D}_p}$ converges pointwise to $f$. Indeed, let $F\in Fin(X)$. If $p\notin F$, by Proposition \ref{PROPOSICION 1},
there is $D_0\in \sD(\sA)$ such that $p\in int(\overline{D_0})$ and $D_0\cap F=\emptyset$, then $F\subseteq X\setminus D_0\subseteq X\setminus D$,
for all $D\subseteq D_0$, $D\in\sD_p$, and ${f_D}_{|F}=f_{|F}$. If $F=\{p\}\cup F_1$ then $f(p)=e_G=f_D(p)$ and, reasoning as above, we can take $D_1\in\sD_p$ such that ${f_D}_{|F_1}=f_{|F_1}$ for all $D\subseteq D_1$, $D\in\sD_p$. Since $\phi|_{\sA^*}$ is continuous with respect to the pointwise convergence topology, we have that $(\phi(f_D))_{D\in\mathcal{D}_p}=(e_G)$ converges to $\phi(f)$.
That is $\phi(f)=e_G$, which implies that $p$ is a support set for $\phi$.
\epf

\bdfn
Let $\mathcal{A}$ be a subgroup of $G^X$. A group homomorphism $\phi:\mathcal{A}\rightarrow G$ is non-vanishing if for every $f,g$ in $\sA$ such that $Z(f)\cap Z(g)=\emptyset$,
it holds that $\phi(f)\neq e_G$ or $\phi(g)\neq e_G$.
\mkp
\edfn

\brem\label{NOTA_1}
Every element $a\in G$ can be identified with the constat map $``a$'' belonging to $G^X$. In this sense, if $\sA$ is a subgroup of $G^X$, we say that $\sA$ \emph{contains the constants maps}
if $G\subseteq \sA$. We write $Epi(G)$ to denote the set of all homomorphisms from $G$ onto $G$. We will also denote by $d$ to the metric defined on $G$
and $B(a,r)$ will mean the open ball centered in $a$ with radius $r$.
\erem

\bprp\label{LEMA 2}
Let $\mathcal{A}$ be a function group in $G^X$ that is an extension of its bounded subgroup $\sA^*$,
which is controllable.
Suppose that $G\subseteq \mathcal{A}$ and $\phi_{|G}\subseteq Epi(G)$.
If $\phi:\mathcal{A}\rightarrow G$ is a separating non vanishing group homomorphism then the minimum weak support $\{p\}$
for $\phi|_{\sA^*}$ is a support set for $\phi|_{\sA^*}$.
\eprp
\bpf
Reasoning by contradiction, suppose there is $f\in\mathcal{A^*}$ such that $f^b(p)=e_G$ but $\phi(f)\neq e_G$. Since $\phi_{|G}\subseteq Epi(G)$,
there is $a\in G\setminus\{e_G\}$ such that $\phi(f)=\phi(a)\neq e_G$. Set $r=d(a,e_G)$ and $U=(f^b)^{-1}(B(e_G,r/2))$, then $f^b(q)\neq a$ for all $q\in U$. As a consequence $(a^{-1}f^b)(q)\neq e_G$ for all $q\in U$. By Proposition \ref{PROPOSICION 1}, we can take $D_p$ and $D_U$ in $\mathcal{D}(\sA)$ such that $p\in int(\overline{D_p})$,
$\beta_{\mathcal{A}^*}X\setminus U\subseteq int(\overline{D_U})$ and $D_p^b\cap D_U^b=\emptyset$. Applying Remark \ref{control},
there is $E\in \mathcal{E}(\sA)$ and $g\in\mathcal{A^*}$ such that $g_{|D_p}=e_G$ and $g_{|(X\setminus E)}=a$, that is $\phi(g)=e_G$ and $g^b_{|int(\overline{D_U})}=a\neq e_G$.
We have $Z(g^b)\subseteq U$ and $Z(a^{-1}f^b)\subseteq \beta_{\mathcal{A}^*}X\setminus U$. Hence $Z(g)\cap Z(a^{-1}f)=\emptyset$ but $\phi(g)=e_G$ and $\phi(a^{-1}f)=\phi(a)^{-1}\phi(f)=e_G$,
which is a contradiction.
\epf

\bprp\label{LEMA 3}
Let $\mathcal{A}$ be a function group in $G^X$ that is an extension of its bounded subgroup $\sA^*$,
which is controllable.
If $\phi:\mathcal{A}\rightarrow G$ is a separating  group homomorphism
such that $\phi|_{\sA^*}$ is non-null and $G$ is a discrete group,
then the minimum weak support $\{p\}$ for $\phi|_{\sA^*}$ is a support set for $\phi|_{\sA^*}$.
\eprp
\bpf
If $G$ is discrete $Z(f^b)=(f^b)^{-1}(e_G)$ is clopen in $\beta_{\mathcal{A}^*}X$.
Thus, if $f\in\sA^*$, it follows that $f(X)$ is a finite subset of $G$. As a consequence,
it is easily verified that $Z(f^b)=\overline{Z(f)}=int(Z(f^b))=int(\overline{Z(f)})$.
Hence, if $p$ is a weak support, then it is automatically a support set.
%Indeed, take $p\in Z(f^b)$ and let $U$ be an open neighborhood of $p$. Then $U\cap Z(f^b)$ is an open subset containing $p$.
%Since $X$ is dense in $\beta_{\mathcal{A}^*}X$, then $\emptyset\neq U\cap Z(f^b)\cap X=U\cap Z(f)$. As a consequence $p\in\overline{Z(f)}$.
\epf

\bprp\label{PROPOSICION_4}
Let $\mathcal{A}$ be a function group in $G^X$ that is an extension of its bounded subgroup $\sA^*$,
which is controllable.
Let $\phi:\mathcal{A}\rightarrow G$ be a separating group homomorphism
such that $\phi|_{\sA^*}$ is non-null, $G\subseteq\mathcal{A}$ and $\phi_{|G}$ is continuous with respect to the topology of $G$.
If $\{p\}$ is a support for $\phi|_{\sA^*}$, then the following assertion hold:
\begin{itemize}
\item[(a)] $\phi|_{\sA^*}$ is continuous with respect to the uniform convergence topology.

\item[(b)] If $p\in X$ then $\phi$ is continuous with respect to the pointwise convergence topology.
\end{itemize}
\eprp
\bpf
(a) Let $(f_i)_{i\in I}$ be a net converging uniformly to $f$ in $\mathcal{A^*}$, then $d(f_i(x),f(x))$ converges uniformly to $0$ for all $x\in X$.
As a consequence $d(f_i^b(q),f^b(q))$ converges uniformly to $0$ for all $q\in\beta_\mathcal{A^*}X$.
In particular $(f_i^b)_{i\in I}$ converges to $f^b$ in the uniform convergence topology. Take the constant maps $a_i\defi f^b_i(p)$ and $a\defi f^b(p)$.
Then $(\phi(a_i))_{i\in I}$ converges to $\phi(a)$ by hypothesis. Since $(a_i^{-1}f_i)^b(p)=a_i^{-1}f_i^b(p)=e_G$ and $(a^{-1}f)^b(p)=a^{-1}f^b(p)=e_G$,
Proposition \ref{LEMA 2} yields that $\phi(a_i^{-1}f_i)=e_G$ and $\phi(a^{-1}f)=e_G$.
Thus $\phi(f_i)=\phi(a_i)$ and $\phi(f)=\phi(a)$, which yields $(\phi(f_i))_{i\in I}$ converges to $\phi(f)$.

(b) The same arguments as in item (a) can be applied here using Lemma \ref{extensionsoporte}\,(b).
\epf

\section {Separating group homomorphisms}

From here on, $\sA$ and $\sB$ will denote two function groups in $G^X$ and $G^Y$
that are extensions of their bounded subgroups $\sA^*$ and $\sB^*$, respectively.
Denote by $\delta_x:\sA\to G$ the evaluation map at the point $x$, that is $\delta_x(f)=f(x)$ for every $f\in\sA$.
It is said that $\sA$ is \emph{pointwise dense} when $\delta_x(\sA)$ is dense in $G$ for all $x\in X$.

\bdfn
A group homomorphism $H\colon\sA\to\mathcal{B}$ is named  \textit{separating} (resp. \textit{weakly separating})
if given two separated (resp. detached) maps $f$ and $g$ in $\sA$,
it holds that $Hf$ and $Hg$ are separated (resp. detached) in $\sB$.

A group homomorphism $H\colon\sA\to\mathcal{B}$ is named \textit{(weakly) biseparating} if it is bijective and both $H$ and $H^{-1}$ are (weakly) separating.
\edfn
\mkp

Remark that we may assume that $H(\sA^*)$ is faithful on $Y$.
Otherwise, we would replace $Y$ by $\bigcup\{cozH(f) : f\in\sA^*\}$.\mkp
%The map $H$ is \textit{bi}(\textit{weakly})\textit{separating} if it is bijective and both $H$ and $H^{-1}$ are (weakly) separating.

%$H$ is \textit{detaching} if $Hf$ and $Hg$ are detached then $f$ and $g$ are detached too for all $f,g\in\sA$.

\bdfn
If $H\colon \sA\to\mathcal{B}$ is a weakly separating group homomorphism and assume that $\sA^*$ is controllable.
% where $\B$ is a subgroup of $G^Y$,
The map $\delta_y\circ H$ is a weakly separating group homomorphism of $\sA$ into $G$ for all $y\in Y$.
Furthermore, since $\sA^*$ is controllable,
%and $coz(\sA^*)$ covers $X$,
the set $$\mathcal{S}_y=\{S\subseteq \mathcal{K}_{\sA^*}: S\text{ is a weak support for }\delta_y\circ H \}$$
contains a singleton as the minimum element of $\sS$, which we denote by $s_y$.
Applying the results established in the precedent section to the homomorphism $\delta_y\circ H$,
we can define the \textit{weak support map} that is canonically associated to $H$:
$$h\colon Y\to \mathcal{K}_{\sA^*} \text{ by } h(y)= s_y.$$
In case $h(y)$ is also a support set for $\delta_y\circ H$ for all $y\in Y$, then we say that $h$ is the \emph{support map} associated to $H$.
%Observe that if $G$ is torsion free, $H\colon\sA\to\B$ is separating, where $\sA$ is an $\omega$-extension of $\sA^*$  then $h(Y)\subseteq \upsilon_{\sA^*}X$.
%Let $\B^*=\{f\in\B:|f(Y)|<\infty\}$. We are going to work with the topology $\tau_{\B^*}$ on $Y$ supposing $\B^*$ separates the points of $Y$.
\edfn
\mkp

Let us assume, from here on, that the sets $X$ and $Y$ are equipped with the topologies canonically associated to them by $\sA$ and $\sB$ respectively.
%Suppose $Y$ is a topological space and $H(\mathcal{A}^*)\subseteq C(Y,G)$.

\bprp\label{hcontinua}
Let $\sA$ and $\sB$ be two function groups in $G^X$ and $G^Y$ such that $\sA^*$ is controllable,
and let $H\colon\sA\to\mathcal{B}$ be a weakly separating homomorphism.
Then the weak support map $h\colon Y\to \beta_{\sA^*}X$ is continuous.
\eprp
\begin{proof}
Let $(y_d)_{d\in D}$ be a net in $Y$ converging to $y\in Y$. Then  $(h(y_d))_{d\in D}\subseteq\beta_{\sA^*}X$.
By a standard compactness argument, we may assume WLOG that $(h(y_d))_d$ converges to $p\in \beta_{\sA^*}X$. Reasoning by contradiction, suppose $h(y)\neq p$.
By Proposition \ref{PROPOSICION 1}.7, we take two disjoint subsets $D_{y}$ and $D_p$ in $\sD(\sA)$ such that $h(y)\in int(\overline{D_y})$,
$p\in int (\overline{D_p})$,
and $D^b_y\cap D^b_p=\emptyset$. There is an index $d_1$ such that $h(y_d)\in int(\overline{D_p})$ for all $d\geq d_1$.

 Every weak support set for $\delta_{v}\circ H$ contains $h(v)$ for all $v\in Y$.
 Hence, the nonempty compact subset $\beta_{\sA^*}X\setminus int(\overline{D_y})$ may not be a weak support for $\delta_y\circ H$.
So, there exists $f\in\sA^*$ such that $\beta_{\sA^*}X\setminus int(\overline{D_y})\subseteq int(\overline{Z(f)})$ and $(\delta_{y}\circ H)(f)=Hf(y)\neq e_G$.
Since $G$ is metrizable, there are two open neighborhoods $U_y$ and $V_{e_G}$ of $Hf(y)$ and $e_G$ respectively, such that $\overline{U_y}\cap \overline{V_{e_G}}=\emptyset$ (closures in $G$).
Moreover, since $H(f)$ is a continuous map, we have that $(Hf(y_d))_d$ converges to $Hf(y)$. Then, there is $d_2\geq d_1$ such that $Hf(y_d)\in U_y$ for all $d\geq d_2$,
which yields $Hf(y_d)\neq e_G$ for all $d\geq d_2$. On the other hand, since $d_2\geq d_1$,
the set $\beta_{\sA^*}X\setminus int(\overline{D_p})$ is not a weak support subset for $\delta_{y_{d_2}}\circ H$.
Hence, there exists $f_2\in\sA^*$ such that $\beta_{\sA^*}X\setminus int(\overline{D_p})\subseteq int(\overline{Z(f_2)})$ and $Hf_2(y)\neq e_G$.
So, we have that $y_{d_2}\in coz(Hf_2)\cap coz(Hf)$ and, since $H$ is a weakly separating map, Proposition \ref{detaching} and Proposition \ref{PROPOSICION 1} yield
$\emptyset\neq \overline{coz(f_2)}\cap \overline{coz(f})=(\beta_{\sA^*}X\setminus int(\overline{Z(f_2)}))\cap(\beta_{\sA^*}X\setminus int(\overline{Z(f)}))\subseteq int(\overline{D_p})\cap int(\overline{D_y})\subseteq D^b_p\cap D_y^b=\emptyset$. This contradiction completes the proof.
\end{proof}
\medskip

We show next that if $H(\sA^*)\subseteq \sB^*$, the (weak) support map $h$ can be extended to a continuous map on $\beta_{\sB^*}Y$.

\bprp\label{*controllable}
Let $\sA$ and $\sB$ be two function groups in $G^X$ and $G^Y$ such that $\sA^*$ is controllable. If $H\colon\sA\to\mathcal{B}$ is a weak separating
homomorphism such that $H(\sA^*)\subseteq \sB^*$, then the following assertions hold:
\begin{enumerate}[(a)]
\item  There is a map $h^b:\beta_{\sB^*}Y\to\beta_{\sA^*}X$ such that $h^b_{|Y}=h$; that is, $h^b$ extends the canonical weak support map $h$
from $Y$ to $\beta_{\sA^*}X$ associated to $H$.
\item  $h^b$ is continuous.
\item  If $H$ is injective then $h^b$ is onto.
%\item[(c)] If $\sA^*$ is normal, $\sB^*$ is controllable, and $H$ is (weakly) biseparating then $h^b$ is a homeomorphism.
\end{enumerate}
\eprp
\bpf
(a) Define the map $H^b:\sA^{b}\to\sB^b$ as $H^bf^b\defi (Hf)^b$ for each $f\in\sA^*$. It is readily seen that the map $H^b$ is a group homomorphism.
%because $H^b(fg^{-1})=(H(fg^{-1}))^b=(Hf(Hg)^{-1})^b=(Hf)^b((Hg)^b)^{-1}=H^bf(H^bg)^{-1}$ for all $f,g\in\sA^*$ for Proposition \ref{PROPOSICION 1}.
We claim that $H^b$ is also weakly separating. Indeed, let $f,g\in\sA^*$ such that $f^b$ and $g^b$ are detached, which means that $f$ and $g$
are detached in $X$. Since $H$ is weakly separating
we have $coz(Hf)\cap coz(Hg)=\emptyset$. Since $Y$ is dense in $\beta_{\sB^*}Y$ and, by Proposition \ref{PROPOSICION 1}.2,
$E^b\cap Y=E$ for all $E\in \sE(\sB)$, %(here $\sE(\sB)$ denotes the ring of sets generated by $coz(\sB)$),
we obtain $coz(H^bf)\cap coz(H^bg)=\emptyset$.

Thus, the map $\delta_q\circ H^b:\sA^b\to G$, where $\delta_q$ is the evaluation map on $q\in\beta_{\sB^*}Y$,
is a non-null weakly separating homomorphism that has a minimum weak support $s_q\in\beta_{\sB^*}Y$.
Therefore, we have defined the weak support map
$$h^b:\beta_{\sB^*}Y\to\beta_{\sA^*}X\text{ by }h^b(q)=s_q.$$
It is straightforward to verify that $h^b_{|Y}=h$.

(b) The proof is analogous to Proposition \ref{hcontinua}.

(c) Suppose $h^b(\beta_{\sB^*}Y)\neq\beta_{\sA^*}X$ and pick up $p\in \beta_{\sA^*}X\setminus h^b(\beta_{\sB^*}Y)$.
Since $h^b$ is continuous and $\beta_{\sB^*}Y$ is compact, we have that $h^b(\beta_{\sB^*}Y)$ is closed subset of $\beta_{\sA^*}X$.
Hence, by Proposition \ref{PROPOSICION 1}.7, there are $D_p,D\in \sD(\sA)$ such that
$p\in int(\overline{D_p})$, $h^b(\beta_{\sB^*}Y)\in int(\overline{D})$ and $D^b_p\cap D^b=\emptyset$.
On the other hand, since $X$ is dense in $\beta_{\sA^*}X$ and $\sA^*$ is faithful,
there is $x_p\in int(\overline{D_p})\cap X$ and $f\in\sA^*$ such that $f(x_p)\neq e_G$.
Applying that $\sA^*$ is controllable we obtain $f'\in\sA^*$ and $E\in\sE(\sA)$ such that
$D_p\subseteq E\subseteq X\setminus D$, $f'_{|D_p}=f_{|D_p}$ and $f'_{|Z(f)\cup(X\setminus E)}=e_G$.
Then $f'(x_p)=f(x_p)\neq e_G$ and $h^b(\beta_{\sB^*}Y)\subseteq int(\overline{D})\subseteq int(\overline{Z(f')})$,
which implies ${Hf'}^b(q)=e_G$ for all $q\in\beta_{\sB^*}Y$.
Thus $(Hf')(y)=e_G$ for all $y\in Y$ and, since $H$  is injective, we obtain $f'=e_G$, which is a contradiction.
\epf
%\mkp

\section{Main results}
\bdfn\label{upsilon}
Given a topological space $(T,\tau)$, we say that $V\subseteq T$ is a \emph{$\tau(\kappa)$-set} if $V=\bigcap\limits_{i<\kappa}V_i$,
where each $V_i\in\tau$. Here $\tau(\kappa)$ denotes the topology on $T$, whose open basis consists of all
$\tau(\kappa)$-sets. Open sets in this topology are named $\tau(\kappa)$-open sets. In like manner,
closed subsets $C\subseteq T$ in this topology are named $\tau(\kappa)$-closed sets. Thus, a subset $C\subseteq T$ is $\tau(\kappa)$-closed
if for every $p\in T\setminus C$ there is a family of open subsets $\{V_i\subseteq T :i<\kappa\}$ such that
$p\in\bigcap\limits_{i<\kappa}V_i$ and $\bigcap\limits_{i<\kappa}V_i\cap C=\emptyset$. Of course, in this definition,
$\tau(\omega)$-sets coincide with the well known notion of  \emph{$G_\delta$-set}, namely countable intersection of open sets.

Now assume that $\sA$ is function group in $G^X$ that is an extension of its bounded subgroup $\sA^*$,
and let $\beta_{\sA^*}X$ be the Hausdorff compactification of $X$ associated to $\sA^*$.
We define the set $$\kappa\upsilon_{\sA^*}X\defi \cap\{C\subseteq \beta_{\sA^*}X: X\subseteq C \text{ and } C \text{ is a } \tau(\kappa)\text{-closed subset}\}.$$
Remark that for every $p\in\kappa\upsilon_{\sA^*}X$ and every $\tau(\kappa)$-set $V$ containing $p$,
we have $V\cap X\neq\emptyset$, that is, $\overline{X}^{\tau(\kappa)}=\kappa\upsilon_{\sA^*}X$.

For simplicity, when $\kappa =\omega$, we will denote the space $\omega\upsilon_{\sA^*}X$ simply as $\upsilon_{\sA^*}X$.
For the sake of simplicity, we will deal with the case $\kappa=\omega$ here, but our results are easily generalized for any cardinal number $\kappa$.
\edfn
\mkp

We next prove that when the group $G$ is discrete and  $\sA$ is a normal function group in $G^X$,
then every map in $\sA$ can be extended to a continuous map defined on $\upsilon_{\sA^*}X$.

\blem\label{extension}
Let  $\sA$ be a normal function group in $G^X$ and assume that $G$ is a discrete group.
Then for every $f\in\sA$ there is a continuous map $f^{\gu}$ of $\upsilon_{\sA^*}X$ into $G$ such that the following diagram commutes
\[
\xymatrix{ X \ar@{>}[rr]^{f} \ar[dr]_{\hbox{id}} & & G \\ & \upsilon_{\sA^*}X\ar[ur]_{f^{\gu}} &}
\]
That is, with conditions above, for each  $f\in\sA$  there is a unique continuous extension $f^{\gu}\colon \upsilon_{\sA^*}X\to G$ such that $f^{\gu}|_X=f$.
\elem
\begin{proof}
If $f\in\sA^*$ we define $f^{\gu}=f^b|_{\upsilon_{\sA^*}X}$. Take now $f\in\sA\setminus\sA^*$.
It will suffice to extend $f$ to a continuous function $f^{\gu}\colon X\cup\{p\}\to G$ such that $f^{\gu}|_X=f$ for all $p\in\upsilon_{\sA^*}X\setminus X$.

Since $G$ is discrete and every map in $\sA^*$ has relatively compact range, it follows that every map in $\sA^*$ has finite range.
On the other hand, $f\in\sA$, which means that there is $\{f_i:i<\omega\}\subseteq\sA^*$,
such that $\{coz(f_i): i<\omega\}$ is locally finite and $f=\prod\limits_{i<\omega}f_i$. This means that $f(X)$ is a countable subset of $G$.
That is $f(X)=\{a_n\}_{n<\omega}\subseteq G$. Thus, the family $\{f^{-1}(a_n):n<\omega\}$ consists of disjoint clopen subsets such that
$\bigcup\limits_{n<\omega}f^{-1}(a_n)=X$.
 We claim that there exists a unique $n<\omega$ such that $p\in\overline{f^{-1}(a_n)}$.

\noindent \emph{Existence}: Reasoning by contradiction suppose that for every $n<\omega$ there is an open neighborhood  $U_n$ of $p$ in $\beta_{\sA^*}X$
such that $U_n\cap f^{-1}(a_n)=\emptyset$. Then $p\in\bigcap\limits_{n<\omega}U_n\in\tau_{\sA^*}(\omega)$ and
$\bigcap\limits_{n<\omega}U_n\cap X=\emptyset$, which is a contradiction since $p\in\upsilon_{\sA^*}X$.
\mkp

\noindent \emph{Uniqueness}: %Suppose that there are $n,m\in\N$, $n\neq m$, such that $p\in\overline{f^{-1}(a_n)}^{\beta_{\sA^*}X}\cap\overline{f^{-1}(a_m)}^{\beta_{\sA^*}X}$.
Since $f^{-1}(a_n)\cap f^{-1}(a_m)=\emptyset$, if $n\not= m$, and $\sA$ is normal, there are $D_n,D_m\in\sD(\sA)$ such that $f^{-1}(a_n)\subseteq D_n$,
$f^{-1}(a_m)\subseteq D_m$
and $ \overline D_n\cap  \overline D_m=\emptyset$, which implies $\overline{f^{-1}(a_n)}\cap \overline{f^{-1}(a_m)}=\emptyset$.

Consider now $X\cup\{p\}$ with the topology inherited from $\beta_{\sA^*}X$  and define $f^\gu(p)=a_n$ and $f^\gu|_X=f$.
We must prove that $f^\gu$ is continuous at $p$.
Since $f^{-1}(a_n)$ is disjoint from $X\setminus f^{-1}(a_n)$ and $\sA$ is normal, there are $D_1,D_2\in\sD$ such that
$(f^{\gu})^{-1}(a_n)=f^{-1}(a_n)\cup \{p\}\subseteq \overline D_1$, $X\setminus f^{-1}(a_n)\subseteq D_2$ and $D_1\cap D_2=\emptyset$.
Since $\overline D_1$ is clopen in $\beta_{\sA^*}X$ and contains $p$, we have $\overline D_b\cap(X\cup\{p\})=(f^{\gu})^{-1}(a_n)$ is clopen in
$X\cup \{p\}$. Therefore $f^\gu$ is continuous at $p$.

%The theorem de Taimanov give the continuity of $\overline{f}$ on $\upsilon_{\sA^*}$.
\end{proof}

We now remember some definitions concerning abelian groups.

\bdfn
Let $\Z^{\N}$ be the so-called {\em Baer-Specker group\/} \cite{Baer,Specker}, that is, the group of all integer sequences, with pointwise addition.
For each $n<\omega$, let $e_n$ be the sequence with $n$-th term equal to $1$ and all other terms $0$.
According to  {\L}o\'{s} (cf. \cite{Fuchs}), a torsion-free abelian group $G$ is said to be \emph{slender} if every homomorphism from $\Z^\N$ into $G$ maps all but finitely many
of the $e_n$ to the identity element. It is a well known fact that every free abelian group is slender and that
every homomorphism from $\Z^\N$ into a slender group factors through $\Z^n$ for some natural number $n$. (cf. \cite{Dudley,Nunke}).
\edfn
\mkp

We next prove that when the group $G$ is slender and  $\sA$ is a function group in $G^X$ that is the largest $\omega$-extension
of its bounded subgroup $\sA^*$, then support sets are included in $\upsilon_{\sA^*}X$.

\begin{prop}\label{localfinita}
Let $\mathcal{A}$ be a function group in $G^X$ such that $\sA^*$ is controllable.
Let $\phi:\mathcal{A}\rightarrow G$ be a weakly separating group homomorphism such that $\phi|_{\sA^*}\neq e_G$.
%Suppose that any of the following assertions hold:
\begin{enumerate}[(a)]
\item If either $\phi$ is continuous with respect to the pointwise topology or $G$ is discrete, then
the minimum weak support set for $\phi$ is a singleton ``$s$'' that is a support set for $\phi|_{\sA^*}$.
\item If $G$ is a slender discrete group and $\sA$ is the largest $\omega$-extension of $\sA^*$,
then  $s\in\upsilon_{\sA^*}X$.
\end{enumerate}
\end{prop}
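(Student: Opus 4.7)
The plan is to reduce part (a) to the structure theory of Section~4 and to handle part (b) by contradiction: assuming $s\notin\upsilon_{\sA^*}X$, I will manufacture a sequence of maps $g_n\in\sA^*$ with locally finite cozeros concentrated near $s$ and satisfying $\phi(g_n)\neq e_G$, then feed them into a homomorphism $\Z^{\N}\to G$ to contradict slenderness.

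For part (a), since $\phi|_{\sA^*}$ is a non-null weakly separating homomorphism on the bounded controllable group $\sA^*$, Proposition~\ref{PROPOSICION_3} yields that the minimum weak support of $\phi|_{\sA^*}$ is a singleton $\{s\}$. If $\phi$ is continuous for the pointwise topology, Proposition~\ref{LEMA 1} immediately upgrades $\{s\}$ to a support set. If instead $G$ is discrete, then every $f\in\sA^*$ has relatively compact range in a discrete group, hence finite range, so $Z(f^b)$ is clopen in $\beta_{\sA^*}X$; together with Proposition~\ref{PROPOSICION 1}.5 this gives $int(\overline{Z(f)})=Z(f^b)$ for every $f\in\sA^*$, and therefore every weak support is automatically a support set.

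For part (b), suppose to the contrary that $s\notin\upsilon_{\sA^*}X$. By the definition of $\upsilon_{\sA^*}X$ there is a decreasing sequence of open sets $V_n\subseteq\beta_{\sA^*}X$ with $s\in\bigcap_n V_n$ and $\bigcap_n V_n\cap X=\emptyset$. Repeated application of Corollary~\ref{CORO1} produces a decreasing sequence $D_n\in\sD(\sA)=\sD(\sA^*)$ with $s\in int(\overline{D_n})\subseteq D_n^b\subseteq V_n$; in particular $\bigcap_n D_n=\bigcap_n(D_n^b\cap X)=\emptyset$. Since $\{s\}$ is the minimum weak support for $\phi|_{\sA^*}$, the compact set $\beta_{\sA^*}X\setminus int(\overline{D_n})$ is not a weak support, so we may choose $g_n\in\sA^*$ with $\beta_{\sA^*}X\setminus int(\overline{D_n})\subseteq int(\overline{Z(g_n)})$ and $\phi(g_n)\neq e_G$; Proposition~\ref{PROPOSICION 1}.5 then forces $coz(g_n)\subseteq D_n$.

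The family $\{coz(g_n)\}$ is locally finite in $X$: for each $x\in X$ there is $n(x)$ with $x\notin D_{n(x)}^b$, and the $\tau_{\sA^*}$-open neighbourhood $X\setminus D_{n(x)}^b$ of $x$ meets no $coz(g_m)\subseteq D_m\subseteq D_{n(x)}$ for $m\geq n(x)$. Since $\sA$ is the largest $\omega$-extension of $\sA^*$, the product $\prod_n g_n^{k_n}$ lies in $\sA$ for every $(k_n)\in\Z^{\N}$. A slender group is abelian, so $\Phi\colon\Z^{\N}\to G$ defined by $\Phi((k_n))=\phi(\prod_n g_n^{k_n})$ is a well-defined group homomorphism, and $\Phi(e_n)=\phi(g_n)\neq e_G$ for every $n$, contradicting the slenderness of $G$. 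The main obstacle is the simultaneous control of $coz(g_n)$ (ensuring both local finiteness and $\prod g_n^{k_n}\in\sA$) together with the non-triviality of each $\phi(g_n)$; once these are secured, slenderness converts the infinite non-vanishing of $\Phi$ on the standard basis into the required contradiction.
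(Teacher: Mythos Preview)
Your proof is correct and follows essentially the same strategy as the paper. Part~(a) is handled identically (Proposition~\ref{PROPOSICION_3} for the singleton, Proposition~\ref{LEMA 1} for the continuous case, and the clopenness of $Z(f^b)$ for the discrete case---which is exactly the content of Proposition~\ref{LEMA 3}). For part~(b), both arguments proceed by contradiction, produce a nested sequence of neighbourhoods of $s$ missing $X$, extract $g_n\in\sA^*$ with $\phi(g_n)\neq e_G$ and locally finite cozeros, and then define a homomorphism $\Z^{\N}\to G$ violating slenderness; the only cosmetic difference is that you interpolate sets $D_n\in\sD(\sA)$ via Corollary~\ref{CORO1} and invoke ``not a weak support'', whereas the paper works directly with the $V_n$ and invokes ``not a support''.
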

\begin{proof}
(a) Applying Propositions \ref{LEMA 1}, \ref{LEMA 2}, and\ \ref{LEMA 3}, it follows that the minimum weak support for $\phi$ is a singleton ``$s$''
that is a support set for $\phi|_{\sA^*}$.
\medskip

(b) Suppose, reasoning by contradiction, that  $s\notin \upsilon_{\sA^*}X$. Then, there is a subset $C$ which is $\tau(\omega)$-closed containing $X$ but $s\notin C$.
Hence, there exists an strictly decreasing sequence  $\{V_n\}_{n<\omega}$ of open subsets in $\beta_{\sA^*}X$
(that can be assumed also closed if the group $G$ is discrete)
such that $V_{n+1}\subseteq \overline{V}_{n+1}\subseteq V_n$, $s\in \bigcap\limits_{n<\omega} V_n$, and
$\bigcap\limits_{n<\omega} V_n\cap X\subseteq\bigcap\limits_{n<\omega}V_n\cap C=\emptyset$.
As a consequence, it is readily seen that $\{V_n\}_{n<\omega}$ is locally finite.
%To see that take $x\in X$. Since $\bigcap\limits_{n<\omega}V_n\subseteq \beta_{\sA^*}X\setminus X$,
%then $x\in\beta_{\sA^*}X\setminus\bigcap\limits_{n<\omega}V_n$, which is an open subset, and there is a clopen (compact) subset $V$ is $\beta_{\sA^*}X$ such that $x\in %V\subseteq\beta_{\sA^*}X\setminus\bigcap\limits_{n<\omega}V_n=\bigcup\limits_{n<\omega}(\beta_{\sA^*}X\setminus V_n)$. Then there si a finite subset $F\subseteq\N$
%such that $x\in V\subseteq\bigcup\limits_{n\in F}(\beta_{\sA^*}X\setminus V_n)=\beta_{\sA^*}X\setminus\bigcap\limits_{n\in F}V_n=\beta_{\sA^*}X\setminus V_{n_F}$,
%where $n_F=\max\{n: n\in F\}$. Thus $V\cap V_{n_F}=\emptyset$ and then $V\cap V_{n}=\emptyset$ for all $n\geq n_F$.
Now, We are going to obtain a sequence $\{f_n\}_{n<\omega}\subseteq\sA^*$ such that $\{coz(f_n):n<\omega\}$ is locally finite.

Indeed, assume WOLG that  $\beta_{\sA^*}X\setminus V_n\neq \emptyset$ for all $n<\omega$. Since every support set for $\phi$ contain $s$, it follows that
$\beta_{\sA^*}X\setminus V_n$ is a closed subset that is not a support for $\phi$. So, there is $f_n\in\sA^*$ such that $\beta_{\sA^*}X\setminus V_n\subseteq Z(f^b_n)$
and $\phi(f_n)\neq e_G$, for each $n<\omega$.

Take an element $\{a_n\}\in \Z^{\N}$ and set $f_n^{a_n}(x)\defi f_n(x)^{a_n}$ for all $x\in X$. Since $coz(f_n^{a_n})\subseteq coz(f^b_n)\subseteq V_n$,
we have that $\{coz(f_n^{a_n})\}_{n<\omega}$ is locally finite in $X$. %Indeed, there is a nonempty open subset $V\subseteq\beta_{\sA^*}X$ and $n_0\in\N$ such that
%$V\cap V_n=\emptyset$ for all $\geq n_0$. Then as $X$ is dense we have that $U=V\cap X$ is a nonempty open subset in $X$ such that $U\cap coz(f_n^{a_n})=\emptyset$
%for all $n\geq n_0$ and  $\{coz(f_n^{a_n})\}_{n<\omega}$ is locally finite.

By hypothesis $\sA$ is the largest $\omega$-extension of $\sA^*$, which implies $\prod\limits_{n<\omega}f_n^{a_n}\in\sA$. Therefore,
we can define the group homomorphism $\psi: \Z^{\N}\colon\to G$\ by
$$ \psi(\{a_n\})\defi \phi(\prod\limits_{n<\omega}(f_n)^{a_n})\in G.$$

In particular, applying the homomorphism $\psi$ to the basic elements $e_n\in \Z^{\N}$,
for all $n<\omega$, we obtain

$\psi (e_n)=\phi(f_n)\neq e_G$ for all $n<\omega$. This is a contradiction since $G$ is a slender group by our initial assumption.
\end{proof}

In the previous section, we have just seen how a weakly separating  homomorphism $H\colon \sA\to \sB$,  where $\sA\subseteq G^X$ is an extension of
a controllable function group $\sA^*$, has associated a continuous (weak support) map $h$ that assigns to each point $y\in Y$
the weak support for $\delta_y\circ H_{|\sA^*}$.
Our next goal is to obtain a complete representation of $H$ by means of the map $h$. In order to do it, we need that
$h(y)$ be a support set for all $y\in Y$ and that $h(Y)\subseteq \upsilon_{\sA^*}X$.
Two conditions assuring these requirements have been given in  Proposition \ref{localfinita}. \mkp

\bdfn\label{peso}
Let $\sA$ and $\sB$ be two function groups in $G^X$ and $G^Y$, respectively, such that
$\sA^*$ is controllable. Given a weakly separating  homomorphism $H\colon \sA\to \sB$,
for each $y\in Y$, define the subgroup of $G$ $$G_{h(y)}\defi \{(f^b\circ h)(y):f\in\sA^*\}$$
and denote by $Hom(G_{h(y)},G)$
the space of all group homomorphisms of $G_{h(y)}$ into $G$, equipped with the pointwise convergence topology.
Consider now the set $$\sG=\bigcup\limits_{y\in Y}Hom(G_{h(y)},G).$$ We can think of all elements of $\sG$ as partial functions on $G$,
that is, functions $\alpha:Dom(\alpha)\subseteq G\to G$ whose domain is a (not necessarily proper) subset of $G$.
We equip $\sG$ with the pointwise convergence topology as follows.

Let $\alpha\in\sG$, $a_1,\cdots,a_n\in Dom(\alpha)$ and $U$ an open neighborhood at $e_G$ in $G$, the set
$[\alpha;a_1,\cdots,a_n,U]=\{\beta\in\sG:\exists b_i\in Dom(\beta), \alpha(a_i)^{-1}\beta(b_i)\in U,\ 1\leq i\leq n\}$  is a basic neighborhood of $\alpha$.
It is easily verified that this procedure extends the pointwise convergence topology over $\sG$. With this notation we define
$$w:Y\to \sG$$ by $$w[y]((f^b\circ h)(y))\defi Hf(y)$$ for each $y\in Y$ and $f\in\sA^*$.
\edfn
\mkp
We shall verify next that, under some mild conditions, the map $w$ is well defined and continuous. Furthermore,
if $G$ is discrete, Lemma \ref{extension} implies that
if $f\in\sA$ and $h(y)\in\upsilon_{\sA^*}X$, then $f^\gu(h(y))\in G_{h(y)}$.

\bprp\label{wcontinua}
Let $\sA$ and $\sB$ be two function groups in $G^X$ and $G^Y$, respectively,
such that $\sA^*$ is controllable. %and $\sA$ is the largest $\omega$-extension of $\sA^*$.
Let $H\colon\sA\to\mathcal{B}$ be a weakly separating homomorphism that is
either continuous for the pointwise convergence topology or, otherwise, assume that $G$ is a (discrete) slender group. Then
the following assertions hold:
\begin{enumerate}[(a)]
\item $w(y)$ is a well defined group homomorphism for all $y\in Y$.
\item $w$ is continuous if $\sG$ is equipped with the pointwise convergence topology.
\end{enumerate}
\eprp

\bpf
(a) Let $h:Y\to\beta_{\sA^*}X$ be the weak support map canonically associated to $H$.
In order to prove that $w(y)$ is well defined, take $f,g\in \sA^*$ such that $f^b(h(y))=g^b(h(y))$.
Then, by Proposition \ref{PROPOSICION 1}.1, $(g^{-1}f)^b(h(y))=e_G$. By the way it was defined, we have that $h(y)$ is a weak support for $\delta_y\circ H$.
On the other hand, Proposition \ref{localfinita} implies that $\{h(y)\}$ is actually a support subset for
$\delta_y\circ H$. Hence $H(g^{-1}f)(y)=e_G$ or, equivalently, $Hf(y)=Hg(y)$. Therefore $w(y)(f^b(h(y)))=Hf(y)=Hg(y)=w(y)(g^b(h(y)))$.
The verification that $w(y)$ is a group homomorphism is straightforward.

(b) Let $(y_d)_{d\in D}$ be a net converging to $y$ in $Y$. Take $a_1,\cdots,a_n\in Dom(w(y))=G_{h(y)}$ and $U$ an open neighborhood of $e_G$ in $G$.
Then there is $f_i\in\mathcal{A}^*$ such that $a_i=(f_i^b\circ h)(y)$, $1\leq i\leq n$. That is $w(y)(a_i)=Hf_i(y)$ and, since $Hf_i$ is continuous,
there is an index $d_i$ such that $Hf_i(y_d)\in Hf_i(y)U$, for all $d\geq d_i$, $1\leq i\leq n$. Let $d\geq d_1,\cdots,d_n$,
then $b_i^{(d)}=(f^b_i\circ h)(y_d)\in G_{h(y_d)}$ and $w(y)(a_i)^{-1}w(y_d)(b_i^{(d)})=(Hf_i(y))^{-1}Hf_i(y_d)\in U$, $1\leq i\leq n$, that is, $w(y_d)\in[w(y);a_1\cdots,a_n,U]$.
\epf
\mkp

We assume below that $\upsilon_{\sA^*}X=X$
for certain function groups $\sA$. It is easily seen that this happens when $(X,\tau_\sA)$ is Lindel\"of.
For example, if the set $X$ is countable or $\sigma$-compact in the topology $\tau_{\sA}$.

\bthm\label{teocaso1} Let $\sA$ and $\sB$ be two function groups in $G^X$ and $G^Y$, respectively, such that
$\sA^*$ is controllable, $\upsilon_{\sA^*}X=X$, $\upsilon_{\sB^*}Y=Y$ and $\sA$ is the largest $\omega$-extension of $\sA^*$.
Let $H\colon\sA\to\mathcal{B}$ be a weakly separating homomorphism that is
either continuous for the pointwise convergence topology or, otherwise, assume that $G$ is a (discrete) slender group.
Then there are continuous maps $$h\colon Y\to X$$ and $$w \colon Y\to\bigcup_{y\in Y} Hom(G_{h(y)},G)$$ satisfying the following properties:
\begin{enumerate}[(a)]
\item For each $y\in Y$ and every $f\in\sA$ it holds $Hf(y)=w[y](f(h(y))]$.
\item If $G$ is a discrete group, then $H$ is continuous with respect to the pointwise convergence topology.
\item If $G$ is a discrete group, then $H$ is continuous with respect to the compact open topology.
\item If $H$ is weakly biseparating and $\sB$ is the largest $\omega$-extension of $\sB^*$, which is controllable, then $h$ is a homeomorphism.
\end{enumerate}
\ethm
\begin{proof}
First, let $h:Y\to\beta_{\sA^*}X$ be the continuous weak support map canonically associated to $H$.
By Propositions \ref{LEMA 1} and \ref{localfinita}, we have
that $h(Y)\subseteq \upsilon_{\sA^*}X=X$ by our initial assumptions.
On the other hand, Proposition \ref{wcontinua}
yields the continuity of the weight map $w$.

\noindent Now, we prove the properties formulated above:

\noindent (a) is straightforward after the definition of $w$.

\noindent (b) Assume that $G$ is discrete since there is nothing to prove otherwise. Then every pointwise convergent net $(f_i)$ in $\sA$
must be pointwise eventually constant. That is, for every $x\in X$, there is $i_0$ such that $i\geq i_0$ implies that
$f_i(x)=f_{i_0}(x)$. Thus the continuity of $H$ follows from (a).

\noindent (c)\ Let $(f_d)_d\subseteq \sA$ be a net converging to the constant function $e_G$ in the compact open topology.
If $K$ is a compact subset of $Y$, then $h(K)$ is a compact subset in $X$ by the continuity of $h$.
Therefore $(f_d)_d$ is eventually the constant function $e_G$ on $h(K)$. Applying (a), it follows that
$(Hf)_d$ is eventually $e_G$ on $K$, which completes the proof.\mkp
\mkp

(d) We have two continuous weak support maps $h^b\colon \beta_{\sB^*}Y\to \beta_{\sA^*}X$ and
$k^b\colon  \beta_{\sA^*}X\to \beta_{\sB^*}Y$ associated to $H|_{\sA^*}$ and $(H^{-1})|_{\sB^*}$, respectively.
Let us to see that $(h^b\circ k^b)|_{X}=\text{id}_X$. First, notice that by Propositions \ref{LEMA 1} and \ref{localfinita},
we have $k(x)\in\gu_{\sB^*}Y=Y$ for all $x\in X$.
Therefore we have $h^b(k(x))$ is actually $h(k(x))$

Reasoning by contradiction, suppose there is $x\in X$ such that $x\neq h(k(x))$.
By Proposition \ref{PROPOSICION 1}.7, there are $D_1,D_2\in\sD$
such that $x\in int(\overline{D}_1)$, $h(k(x))\in int(\overline{D}_2)$ and $D^b_1\cap D^b_2=\emptyset$. Take $f_1\in \sA^*$ such that $f_1(x)\neq e_G$.
Applying that $\sA^*$ is controllable to $f_1$, $D_1$ and $D_2$, there are $U_1\in\sE(\sA)$ and $f'_1\in\sA^*$ such that $D_1\subseteq U_1$,
$U_1\cap D_2=\emptyset$, $f'_1|_{D_1}=f_1|_{D_1}$ and $f'_1|_{Z(f_1)\cup(X\setminus U_1)}=e_G$. Therefore $x\in coz(f'_1)\subseteq U_1$.
Applying assertion (a) to $H^{-1}$ and $x$, we obtain  $$e_G\not=f'_1(x)=H^{-1}Hf'_1(x)=\rho[y](Hf'_1(k(x))]$$ for an associated map $\rho$.
Since $\rho[y]$ is a group homomorphism, this implies that $Hf'_1(k(x))\not=e_G$ and, as a consequence, $k(x)\in coz(Hf'_1)$.

On the other hand $h(k(x))\in int(\overline{D}_2)\subseteq int(\overline{Z(f'_1)}$ and, since
$h(k(x))$ is a weak support set for $\delta_{k(x)}\circ H$, we obtain that $Hf'_1(k(x))=e_G$.
This is a contradiction that completes the proof.

In like manner, we can prove that $k\circ h=\text{id}_Y$. Therefore $k=h^{-1}$ and $h$ is a homeomorphism.
\end{proof}
\mkp

Recall that a function group $\sA$ in $G^X$ is $\sA$ is \emph{pointwise dense} if
$\delta_x(\sA)$ is dense in $G$ for all $x\in X$.

\bcor\label{corcaso1} Let $\sA$ and $\sB$ be two function groups in $G^X$ and $G^Y$, respectively, such that $\sA$ is pointwise dense,
$\sA^*$ is controllable, $\upsilon_{\sA^*}X=X$, $\upsilon_{\sB^*}Y=Y$ and $\sA$ is the largest $\omega$-extension of $\sA^*$.
Let $H\colon\sA\to\mathcal{B}$ be a weakly separating homomorphism that is
either continuous for the pointwise convergence topology or, otherwise, assume that $G$ is a (discrete) slender group.
Then there are continuous maps $$h\colon Y\to X$$ and $$w \colon Y\to End(G)$$ satisfying the following properties:
\begin{enumerate}[(a)]
\item For each $y\in Y$ and every $f\in\sA$ it holds $Hf(y)=w[y](f(h(y))]$.
\item If $G$ is a discrete group, then $H$ is continuous with respect to the pointwise convergence topology.
\item If $G$ is a discrete group, then $H$ is continuous with respect to the compact open topology.
\item If $H$ is weakly biseparating and $\sB$ is the largest $\omega$-extension of $\sB^*$, which is controllable, then $h$ is a homeomorphism.
\end{enumerate}
\ecor
\mkp

We are in position of establishing the main result in this paper.

\bthm\label{teoremacaso1}
Let $\sA$ and $\sB$ be two, pointwise dense, function groups in $G^X$ and $G^Y$, respectively, such that
$\sA^*$ and $\sB^*$ are controllable, $\upsilon_{\sA^*}X=X$, $\upsilon_{\sB^*}Y=Y$, and $\sA$ and $\sB$
are the largest $\omega$-extensions of their bounded subgroups.
Let $H\colon\sA\to\mathcal{B}$ be a weakly biseparating isomorphism that is
either continuous for the pointwise convergence topology or, otherwise, assume that $G$ is a (discrete) slender group.
Then the function groups $\sA$ and $\sB$ are equivalent.

\noindent That is, there are continuous maps $$h\colon Y\to X$$
and $$w \colon Y\to Aut(G)$$ satisfying the following properties:
\begin{enumerate}[(a)]
\item $h$ is a homeomorphism.
\item For each $y\in Y$ and every $f\in\sA$ it holds $Hf(y)=w[y](f(h(y))]$.
\item $H$ is a continuous isomorphism with respect to the pointwise convergence topology.
\item $H$ is a continuous isomorphism with respect to the compact open topology.
\end{enumerate}
\ethm
\bpf
We only need to verify that $w[y]\in Aut(G)$ for all $y\in Y$. Applying Corollary \ref{corcaso1} to $H^{-1}$, we obtain maps $$\rho\colon Y\to End(G)$$ and
$$k\colon X\to Y$$ such that for every $x\in X$ and $g\in\sB$, we have $H^{-1}g(x)=\rho[x](g(k(x)))$. Thus, for every $f\in\sA$ and $x\in X$, we have
$$f(x)=H^{-1}\circ(Hf)(x)=\rho[x](Hf(k(x)))=\rho[x](w[k(x)](f(h(k(x))))$$  and $h^{-1}=k$ by the proof of Theorem \ref{teocaso1}(d).
Therefore $$f(x)=H^{-1}\circ (Hf)(x)=\rho[x](Hf(k(x)))=\rho[x](w[k(x)](f(x))$$ and
$$g(y)=H\circ (H^{-1}g)(y)=w[y](H^{-1}g(h(y)))=w[y](\rho[h(y)](g(y)).$$ Applying the former equality to $x=h(y)$,
it follows that $\rho[h(y)]\circ w[y]=\text{id}_G$ for all
$y\in Y$, and from the latter, we also have that $w[y]\circ\rho[h(y)]=\text{id}_G$. This means that $w[y]$ is an automorphism on $G$.
\epf

\textbf{Acknowledgment:} The authors thank the referees for several helpful comments.


\begin{thebibliography}{99}

%\bibitem{Abram_Kito:2000}
%{Abramovich, Y. A. and Kitover, A. K.},
%{\it Inverses of disjointness preserving operators},
%{Mem. Amer. Math. Soc.},
%{\bf 143}, {(2000)}, {no. 679}.

%\bibitem{Alaminos_et_Alters:2009}
%{Alaminos, J. and Bre{\v{s}}ar, M. and Extremera, J. and Villena, A. R.},
%{\it Maps preserving zero products}, {Studia Math.},
%{\bf 193}, no. 2, {(2009)}, {131--159}.

%\bibitem{Ara_Bec_Nar}
%Araujo, J., Beckenstein, E., Narici, L.,
%{\it On biseparating maps between realcompact spaces},
%Journal of Mathematical Analysis and Applications,
%{\bf }, no.

%\bibitem{Ara_Jar:2003}
%{Araujo, J. and Jarosz, K.:},
%{\it Biseparating maps between operator algebras},
%  {Journal of Mathematical Analysis and Applications},
%   {\bf 282}, (1),
%      {(2003)},
%    {48--55}.

\bibitem{Baer}
R.~Baer,
Abelian groups without elements of finite order,
Duke Math. J. 3(1) (1937), 68--122.

\bibitem{Banach}
Banach, S.: {\it Th\'eorie des op\'erations lin\'eaires}, Chelsea, 1955.

\bibitem{cordeiro}
Cordeiro, L.: {\it Disjoint continuous functions},
arXiv.org (2017), https://arxiv.org/abs/1711.00545

\bibitem{Dudley}
R.~M.~Dudley,
Continuity of homomorphisms,
Duke Math. J. 28 (1961), 587--594.

\bibitem{eda kiyosa ohta}
Eda, K., Kiyosawa, T. and Ohta, H.: \emph{$N$-compactness and its
applications. Topics in general topology}, North-Holland Math.
Library \textbf{41}(1989), North-Holland, Amsterdam, 459-521.

\bibitem{engel}
Engelking, R.: General Topology, Polish Scientific Publishers,
Warszawa (1977).

\bibitem{FerGarHer:JFS}
{Ferrer, M., Gary, M., and Hern\'andez, S.:}
{\it Representation of group isomorphisms: The compact case}
(2014), Journal of Function Spaces, Vol. 2015, Article ID 879414, 6 pages, 2015.
doi:10.1155/2015/879414

\bibitem{Fer_Her_Gar:jmaa}
{Ferrer, M., Gary, M., and Hern\'andez, S.:}
{\it Weight-preserving isomorphisms between spaces of continuous functions: The scalar case},
J. Math. Anal. Appl., {\bf 433} (2016) 1659--1672.

\bibitem{Fer_Her_Rod}
{Ferrer, M. V., Hern{\'a}ndez, S., and
              R{\'o}denas, A. M.:} {\it Automatic continuity of biseparating homomorphisms defined
              between groups of continuous functions},
  {Topology and its Applications},
   {\bf 157},  {8},
      {(2010)},
   {1395--1403}.

% \bibitem{Fer_Her_Sha:Products}
%  {Ferrer, M., Hern\'andez, S., and Shakhmatov, D.:}
%  {\it Subgroups of direct products closely approximated by direct sums},
%  Forum Math. {\bf 29}, No. 5, 1125-1144 (2017).

\bibitem{Fuchs}
L.~Fuchs, Infinite abelian groups, I. Academic Press, New York, 1970.

\bibitem{gau-jeang-wong}
Gau, H-L, Jeang, J-S, and Wong, N. C.: \emph{An algebraic approach to
the Banach-Stone theorem for separating linear bijections} Taiwanese
J. Math.  \textbf{6}(3)(2002), 399-403.

\bibitem{gill-jer}
Gillman, L. and Jerison, M.: Rings of continuous functions, Springer
Verlag, 1976.

\bibitem{GluLue:2009}
{Gluesing-Luerssen, H.:}
     {\it On isometries for convolutional codes},
   {Advances in Mathematics of Communications},
    {\bf 3}, {(2)}
      {(2009)},
    {179--203}.

%\bibitem{gonzalez-uspenskij}
%González, F. and Uspenskij, V.V.: \emph{On homomorphisms of groups
%of integer-valued functions}, Extracta Math.\textbf{14}(1)(1999),
%19-29.

\bibitem{SHdz:Houston}
{Hern{\'a}ndez, S.:}
     \emph{Uniformly continuous mappings defined by isometries of spaces of bounded uniformly continuous functions},
   {Houston Journal of Mathematics}, \textbf{29},  {(2003)}, {1}, {149--155}.

\bibitem{her-bec-nar}
Hernández, S., Beckenstein, E.  and Narici. L.: \emph{Banach-Stone
theorems and separating maps}, Manuscripta Math.
\textbf{86}(1995), 409-416.

\bibitem{her_rod:2007}
Hern\'andez, S. and Rodenas, A. M.: \textit{Automatic continuity and
representation of group homomorphisms defined between groups of
continuous functions}, Top. and Appl. \textbf{154} (2007),
2089--2098.

\bibitem{McW:ii} MacWilliams, F. J.: {\it A theorem on the distribution of weights in a systematic code},
Bell Syst. Tech. J., \textbf {42}, (1963), 79-94.

\bibitem{martinez}
Martínez, J: \emph{$C(X,Z)$ revisited}, Adv. Math.
\textbf{99}(2)(1993), 152-161.

\bibitem{Nunke}
R.~J.~Nunke, On direct products of infinite cyclic groups,
Proc. Amer. Math. Soc. 13 (1962), 66--71.

\bibitem{ohta 96}
Ohta, H.: \emph{Chains of strongly non-reflexive dual groups of
integer-valued continuous functions}, Proc. Amer. Math. Soc.
\textbf{124}(3)(1996), 961-967.

%\bibitem{piret:1988}
%Piret, P.: {\it Convolutional Codes; An Algebraic Approach}, MIT Press, Cambridge, MA,
%1988.

\bibitem{Sha_Spe:2010}
{Shakhmatov, D. and Sp{\v{e}}v{\'a}k, J.:}
     {\it Group-valued continuous functions with the topology of
              pointwise convergence},
   {Topology and its Applications},
   {\bf 157},
      {(2010)},
{(8)},     {1518--1540}.

\bibitem{Specker}
E.~Specker,
Additive Gruppen von Folgen ganzer Zahlen, Portugaliae Math. 9 (1950), 131--140.

\bibitem{Stone}
Stone, M.H.: {\it Applications of the theory of boolean rings to General Topology},
Trans. Amer. Math. Soc., {\bf 41} (1937), 375-481.

\bibitem{yang-1}
Yang, J.S.: \emph{Transformation groups of automorphisms of
$\mathcal{C}(X,G)$}, Proc. Amer. Math. Soc. \textbf{39}(1973),
619-624.

\bibitem{yang}
Yang, J.S.: \emph{On isomorphic groups and homeomorphic spaces},
Proc. Amer. Math. Soc. \textbf{43}(1974), 431-438.
\end{thebibliography}
\end{document}